\newtheorem{theorem}{Theorem}
\newtheorem{lemma}{Lemma}
\newtheorem{property}{Property}
\newtheorem{proposition}{Proposition}
\newtheorem{conjecture}{Conjecture}
\newtheorem{corollary}{Corollary}
\newtheorem{remark}{Remark}
\definecolor{magenta}{rgb}{0.75,0,0.25}
\definecolor{violet}{rgb}{0.25,0,0.75}
\definecolor{dgreen}{rgb}{0.0,0.5,0.0}
\newcommand{\A}{\includegraphics{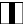}}
\newcommand{\B}{\includegraphics{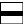}}
\newcommand{\C}{\includegraphics{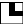}}
\newcommand{\D}{\includegraphics{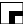}}
\newcommand{\E}{\includegraphics{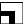}}
\newcommand{\F}{\includegraphics{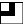}}
\newcommand{\Aa}{{\mathcal A}}
\newcommand {\W}  {{\mathcal W}}
\newcommand {\V}  {{\mathcal V}}
\newcommand {\G}  {{\mathcal G}}
\newcommand{\linf}{L_{\infty}}
\title{On the dimension of arcs in mixed labyrinth fractals}
\author{
Ligia L. Cristea \thanks{This author's work was supported by the Austrian Science Fund (FWF), 
Projects P27050-N26, P29910-N35 and F-5512.} 
\\Technische Universit\"at Graz, Institut f\"ur Mathematik\\ Kopernikusgasse 24,
 8010 Graz, Austria\\ \tt{strublistea@gmail.com} 
\\ \and Damir Vuki\v{c}evi\'c 
\\ Department of Mathematics, Faculty of Science,\\ University of Split, Rugjera Boskovica 33, HR-21000 Split, Croatia,\\ \tt{damir.vukicevic@pmfst.hr}
}
\date{\today}
\begin{document}

\maketitle

\textbf{Keywords:} fractal, dendrite, pattern, tree, arc, box counting dimension\\\\
\textbf{AMS Classification:}   28A80, 
05C38, 28A75, 05C05

\abstract{ Mixed labyrinth fractals are dendrites in the unit square, which were studied recently with respect to the lengths of arcs in the fractals. In this article we first give a construction method for mixed labyrinth fractals with the property that all arcs in the fractal have box counting dimension $2$. Subsequently, we show how a certain familiy of patterns can provide a mixed labyrinth fractal of any box counting dimension between $1$ and $2$, which also coincides with the box-counting dimension of the arc between any two distinct points of the fractal. Finally, we show how the results can be extended to a more general setting.}

\section{Introduction}

Labyrinth fractals are dendrites in the unit square which were introduced in the last decade \cite{laby_4x4,laby_mxm}. They can also be viewed as special families of Sierpi\'nski carpets. Labyrinth fractals  were studied with respect to their  topological and geometrical  properties, with special focus on the arcs in these dendrites. The construction of labyrinth sets and labyrinth fractals is done  by using labyrinth patterns: in the self-similar case one labyrinth pattern generates, inductively, a sequence of labyrinth sets, whose limit is a labyrinth fractal, and in the case of mixed labyrinth sets and mixed labyrinth fractals  \cite{mixlaby} more than one pattern (in general, even infinitely many) are used for their construction.

While for the self-similar labyrinth fractals \cite{laby_4x4,laby_mxm} there are results on the lengths and the dimension of the arcs in these dendrites, the papers on the mixed case \cite{mixlaby,arcs_mixlaby} deal mainly with the length of arcs, and how this depends on the properties of the patterns that generate a mixed labyrinth fractal. The main goal of the present article is to provide examples for the construction of families of mixed labyrinth fractals with the property that the arcs between distinct points of the fractal have given box counting dimension, equal to the dimension of the whole fractal. 

There exists quite a lot of interest in fractal dendrites lately. Self-similar dendrites were constructed with the help of polygonal systems and studied recently \cite{SamuelTetenov_selfsimilar_dendrites}, but the emphasis is not on length of arcs, and, moreover, the methods are different, using  tools like contractive iterated function systems and zippers, a different approach from ours, and restricted only to self-similar fractals.
We also mention existing results on dimensions of homogenoeous Moran sets in a closed real interval of finite length \cite{FengWenWu_dim_homogeneous_Moran_sets1997}, since the fractals studied in the present paper can be seen as a family of planar relatives of these.

In the last years,  labyrinth fractals and results on them
have already shown to have applications in physics. We mention, e.g., 
the fractal reconstruction of complicated
images, signals and radar backgrounds \cite{PotapovGermanGrachev2013}, or 
the construction of prototypes of ultra-wide band radar antennas
\cite{PotapovZhang2016} . Very recently,
 fractal labyrinths combined 
with genetic algorithms were used in order to create big robust antenna arrays and
nano-antennas in  telecommunication \cite{PotapovPotapovPotapov_dec2017}. 
Concerning further possible  applications of mixed labyrinth fractals, 
we want to mention very recent results in materials engineering \cite{JanaGarcia_lithiumdendrite2017}, which show that dendrite growth, a largely unsolved problem,  plays an essential role when  approaching high power and energy lithium-ion batteries. Furthermore, there is recent research on crystal growth \cite{tarafdar_multifractalNaCl2013} which suggests that, depending on the geometric structure of the studied material, labyrinth fractals can offer a suitable model for studying various phenomena and objects occurring in other fields of science.
Finally, we mention that Koch curves, which are of interest in
theoretical physics in the context of diffusion processes ({e.g., }
\cite{SeegerHoffmannEssex2009_randomKoch}), are related to arcs  in  labyrinth fractals.

Let us now give a short outline of the article. 
  Section \ref{sec:preliminaries} contains the needed preliminaries: we give the definition of labyrinth patters, labyrinth sets and labyrinth fractals, and cite some of the main results regarding topological and geometric properties of labyrinth fractals proven in earlier research \cite{laby_4x4,laby_mxm,mixlaby,supermix}, which are essential to our considerations throughout this paper.
In Section \ref{sec:snake_cross_patterns} we introduce a special family of horizontally and vertically blocked labyrinth patterns called snake cross patterns. We analyse their shape, as well as the the objects  generated by certain sequences of snake cross patterns. These are mixed labyrinth sets and, in the limit, mixed labyrinth fractals, dendrites with the property that the length of any non-trivial arc is infinite.
 Section \ref{sec:mixed_arcs_max_dim}     is dedicated to proving  that the fractal dendrite obtained as the limit of the construction in the previous section has the property that the arc between any two distinct points in the fractal has box counting dimension two. Subsequently, we give in Section \ref{sec:omnidimensional} a construction based on the use of snake cross patterns that provides mixed labyrinth fractals of any box-counting dimension $\delta \in [1,2)$. In a short last section we take a glipmse at how these results could be used to infere analogons thereof in a more general setting. We conclude with a conjecture.

\section{Preliminaries on self-similar and mixed labyrinth fractals}\label{sec:preliminaries}

 \emph{Labyrinth patterns} are very convenient tools for the construction labyrinth fractals. The notations and notations have already been introduced and used in work on self-similar labyrinth fractals \cite{laby_4x4, laby_mxm} and later on for the mixed ones \cite{mixlaby}.
Let $x,y,q\in [0,1]$ such that $Q=[x,x+q]\times [y,y+q]\subseteq [0,1]\times [0,1]$. 
For any point $(z_x,z_y)\in[0,1]\times [0,1]$ we define the map
$P_Q(z_x,z_y)=(q z_x+x,q z_y+y)$.
For any integer $m\ge 1$ let $S_{i,j,m}=\{(x,y)\mid \frac{i}{m}\le x \le \frac{i+1}{m} \mbox{ and } \frac{j}{m}\le y \le \frac{y+1}{m} \}$ and  
${\cal S}_m=\{S_{i,j,m}\mid 0\le i\le m-1 \mbox{ and } 0\le j\le m-1 \}$. 

Any nonempty ${\cal A} \subseteq {\cal S}_m$ is called an $m$-\emph{pattern} and $m$ its \emph{width}. Let $\{{\cal A}_k\}_{k=1}^{\infty}$ 
be a sequence of non-empty patterns and $\{m_k\}_{k=1}^{\infty}$ be the corresponding 
\emph{width-sequence}, i.e., for all $k\ge 1$ we have 
${\cal A}_k\subseteq {\cal S}_{m_k}$. 
We denote $m(n)=\prod_{k=1}^n m_k$, for all $n \ge 1$. 
Let ${\cal W}_1={\cal A}_{1}$. Ee call ${\cal W}_1$ the 
\emph{set of white squares of level $1$}, and 
the elements of ${\cal S}_{m_1} \setminus {\cal W}_1$ the \emph{black squares of level $1$}.
 For $n\ge 2$ the \emph{set of white squares of level $n$} is defined as
\begin{equation} \label{eq:W_n}
{\cal W}_n=\bigcup_{W\in {\cal A}_{n}, W_{n-1}\in {\cal W}_{n-1}}\{ P_{W_{n-1}}(W)\}.
\end{equation}

 \noindent Note that ${\cal W}_n\subset {\cal S}_{m(n)}$. 
We call the elements of${\cal S}_{m(n)} \setminus {\cal W}_n$ the \emph{black squares of level} $n$. For $n\ge 1$, we define $L_n=\bigcup_{W\in {\cal W}_n} W$. 
Thus, $\{L_n\}_{n=1}^{\infty}$ is a monotonically decreasing sequence of compact sets, and $L_{\infty}=\bigcap_{n=1}^{\infty}L_n$,  is the \emph{limit set defined by the sequence of patterns 
$\{{\cal A}_k\}_{k=1}^{\infty}.$ }

Examples of patterns are shown in figures~\ref{fig:A1A2} and \ref{fig:W2}.  These also illustrate the first two steps of the 
construction of a mixed labyrinth set. 

We define, for ${\cal A}\subseteq {{\cal S}_m}$, \emph{the graph of the pattern ${\cal A}$}, $\mathcal{G}({\cal A})\equiv (\mathcal{V}(\mathcal{G}({\cal A})),\mathcal{E}(\mathcal{G}({\cal A})))$: Its set of vertices $\mathcal{V}(\mathcal{G}({\cal A}))$ consists of the white squares in ${\cal A}$, i.e.,  $\mathcal{V}(\mathcal{G}({\cal A}))={\cal A}$ and its edges $\mathcal{E}(\mathcal{G}({\cal A}))$ are the unordered pairs of white squares, that have a common side. The \emph{top row} in ${\cal A}$ is the set of all white squares in $\{S_{i,m-1,m}\mid 0\le i\le m^n-1 \}$. The bottom row, left column, and right column in ${\cal A}$  are defined analogously. A \emph{top exit} in ${\cal A}$ is a white square in the top row, such that there is a white square in the same column in the bottom row. A \emph{bottom exit} in ${\cal A}$  is defined analogously. A \emph{left exit} in ${\cal A}$ is a white square in the left column, such that there is a white square in the same row in the right column. A \emph{right exit} in ${\cal A}$ is defined analogously. A top exit together with the corresponding bottom exit build a \emph{vertical exit pair},  and a left exit together with the corresponding right exit build a \emph{horizontal exit pair}.
The above notions can also be defined in the special case ${\cal A}={\cal W}_{n}$. 

\begin{figure}[hhhh]
\begin{center}
\begin{tikzpicture}[scale=.36]
\draw[line width=1pt] (0,0) rectangle (10,10);
\draw[line width=1pt] (2.5, 0) -- (2.5,10);
\draw[line width=1pt] (5, 0) -- (5,10);
\draw[line width=1pt] (7.5, 0) -- (7.5,10);
\draw[line width=1pt] (0, 2.5) -- (10,2.5);
\draw[line width=1pt] (0, 5) -- (10,5);
\draw[line width=1pt] (0, 7.5) -- (10,7.5);
\filldraw[fill=black, draw=black] (2.5,0) rectangle (5, 2.5);
\filldraw[fill=black, draw=black] (5,0) rectangle (7.5, 2.5);
\filldraw[fill=black, draw=black] (7.5,0) rectangle (10, 2.5);
\filldraw[fill=black, draw=black] (5,2.5) rectangle (7.5, 5);
\filldraw[fill=black, draw=black] (0,5) rectangle (2.5, 7.5);
\filldraw[fill=black, draw=black] (5,7.5) rectangle (7.5, 10);
\filldraw[fill=black, draw=black] (7.5,7.5) rectangle (10, 10);
\draw[line width=1pt] (11,0) rectangle (21,10);
\draw[line width=1pt] (13, 0) -- (13,10);
\draw[line width=1pt] (15, 0) -- (15,10);
\draw[line width=1pt] (17, 0) -- (17,10);
\draw[line width=1pt] (19, 0) -- (19,10);
\draw[line width=1pt] (11, 2) -- (21,2);
\draw[line width=1pt] (11, 4) -- (21,4);
\draw[line width=1pt] (11, 6) -- (21,6);
\draw[line width=1pt] (11, 8) -- (21,8);
\filldraw[fill=black, draw=black] (13,0) rectangle (15, 2);
\filldraw[fill=black, draw=black] (17,0) rectangle (19, 2);
\filldraw[fill=black, draw=black] (19,0) rectangle (21, 2);
\filldraw[fill=black, draw=black] (19,2) rectangle (21, 4);
\filldraw[fill=black, draw=black] (11,4) rectangle (13, 6);
\filldraw[fill=black, draw=black] (15,4) rectangle (17, 6);
\filldraw[fill=black, draw=black] (15,6) rectangle (17, 8);
\filldraw[fill=black, draw=black] (17,6) rectangle (19, 8);
\filldraw[fill=black, draw=black] (11,8) rectangle (13, 10);
\filldraw[fill=black, draw=black] (17,8) rectangle (19, 10 );
\filldraw[fill=black, draw=black] (19,8) rectangle (21, 10);
\end{tikzpicture}
\caption{Two labyrinth patterns, ${\cal A}_1$ (a $4$-pattern) and ${\cal A}_2$ (a $5$-pattern)}\label{fig:A1A2}
\end{center}
\end{figure}
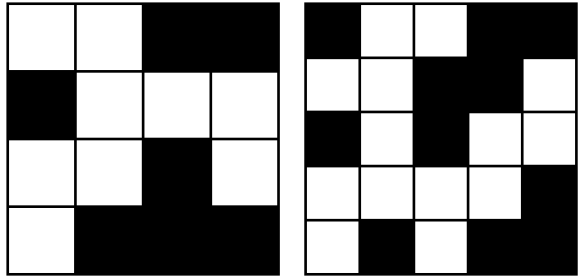

\begin{figure}[hhhh]\label{W2}
\begin{center}
\begin{tikzpicture}[scale=.18]
\draw[line width=1pt] (0,0) rectangle (20,20);
\draw[line width=0.8pt] (5, 0) -- (5,20);
\draw[line width=0.8pt] (10, 0) -- (10,20);
\draw[line width=0.8pt] (15, 0) -- (15,20);
\draw[line width=0.8pt] (0, 5) -- (20,5);
\draw[line width=0.8pt] (0, 10) -- (20,10);
\draw[line width=0.8pt] (0, 15) -- (20,15);
\draw[line width=0.5pt] (1, 0) -- (1,20);
\draw[line width=0.5pt] (2, 0) -- (2,20);
\draw[line width=0.5pt] (3, 0) -- (3,20);
\draw[line width=0.5pt] (4, 0) -- (4,20);
\draw[line width=0.5pt] (6, 0) -- (6,20);
\draw[line width=0.5pt] (7, 0) -- (7,20);
\draw[line width=0.5pt] (8, 0) -- (8,20);
\draw[line width=0.5pt] (9, 0) -- (9,20);
\draw[line width=0.5pt] (11, 0) -- (11,20);
\draw[line width=0.5pt] (12, 0) -- (12,20);
\draw[line width=0.5pt] (13, 0) -- (13,20);
\draw[line width=0.5pt] (14, 0) -- (14,20);
\draw[line width=0.5pt] (16, 0) -- (16,20);
\draw[line width=0.5pt] (17, 0) -- (17,20);
\draw[line width=0.5pt] (18, 0) -- (18,20);
\draw[line width=0.5pt] (19, 0) -- (19,20);
\draw[line width=0.5pt] (0, 1) -- (20,1);
\draw[line width=0.5pt] (0, 2) -- (20,2);
\draw[line width=0.5pt] (0, 3) -- (20,3);
\draw[line width=0.5pt] (0, 4) -- (20,4);
\draw[line width=0.5pt] (0, 6) -- (20,6);
\draw[line width=0.5pt] (0, 7) -- (20,7);
\draw[line width=0.5pt] (0, 8) -- (20,8);
\draw[line width=0.5pt] (0, 9) -- (20,9);
\draw[line width=0.5pt] (0, 11) -- (20,11);
\draw[line width=0.5pt] (0, 12) -- (20,12);
\draw[line width=0.5pt] (0, 13) -- (20,13);
\draw[line width=0.5pt] (0, 14) -- (20,14);
\draw[line width=0.5pt] (0, 16) -- (20,16);
\draw[line width=0.5pt] (0, 17) -- (20,17);
\draw[line width=0.5pt] (0, 18) -- (20,18);
\draw[line width=0.5pt] (0, 19) -- (20,19);
\filldraw[fill=black, draw=black] (5,0) rectangle (10, 5);
\filldraw[fill=black, draw=black] (10,0) rectangle (15, 5);
\filldraw[fill=black, draw=black] (15,0) rectangle (20, 5);
\filldraw[fill=black, draw=black] (10,5) rectangle (15, 10);
\filldraw[fill=black, draw=black] (0,10) rectangle (5, 15);
\filldraw[fill=black, draw=black] (10,15) rectangle (15, 20);
\filldraw[fill=black, draw=black] (15,15) rectangle (20, 20);
\filldraw[fill=black, draw=black] (1,0) rectangle (2, 1);
\filldraw[fill=black, draw=black] (3,0) rectangle (4, 1);
\filldraw[fill=black, draw=black] (4,0) rectangle (5, 1);
\filldraw[fill=black, draw=black] (4,1) rectangle (5, 2);
\filldraw[fill=black, draw=black] (0,2) rectangle (1, 3);
\filldraw[fill=black, draw=black] (2,2) rectangle (3, 3);
\filldraw[fill=black, draw=black] (2,3) rectangle (3, 4);
\filldraw[fill=black, draw=black] (3,3) rectangle (4, 4);
\filldraw[fill=black, draw=black] (0,4) rectangle (1, 5);
\filldraw[fill=black, draw=black] (3,4) rectangle (4, 5 );
\filldraw[fill=black, draw=black] (4,4) rectangle (5, 5);
\filldraw[fill=black, draw=black] (1,5) rectangle (2, 6);
\filldraw[fill=black, draw=black] (3,5) rectangle (4, 6);
\filldraw[fill=black, draw=black] (4,5) rectangle (5, 6);
\filldraw[fill=black, draw=black] (4,6) rectangle (5, 7);
\filldraw[fill=black, draw=black] (0,7) rectangle (1, 8);
\filldraw[fill=black, draw=black] (2,7) rectangle (3, 8);
\filldraw[fill=black, draw=black] (2,8) rectangle (3, 9);
\filldraw[fill=black, draw=black] (3,8) rectangle (4, 9);
\filldraw[fill=black, draw=black] (0,9) rectangle (1, 10);
\filldraw[fill=black, draw=black] (3,9) rectangle (4, 10);
\filldraw[fill=black, draw=black] (4,9) rectangle (5, 10);
\filldraw[fill=black, draw=black] (6,5) rectangle (7, 6);
\filldraw[fill=black, draw=black] (8,5) rectangle (9, 6);
\filldraw[fill=black, draw=black] (9,5) rectangle (10, 6);
\filldraw[fill=black, draw=black] (9,6) rectangle (10, 7);
\filldraw[fill=black, draw=black] (5,7) rectangle (6, 8);
\filldraw[fill=black, draw=black] (7,7) rectangle (8, 8);
\filldraw[fill=black, draw=black] (7,8) rectangle (8, 9);
\filldraw[fill=black, draw=black] (8,8) rectangle (9, 9);
\filldraw[fill=black, draw=black] (5,9) rectangle (6, 10);
\filldraw[fill=black, draw=black] (8,9) rectangle (9, 10);
\filldraw[fill=black, draw=black] (9,9) rectangle (10, 10);
\filldraw[fill=black, draw=black] (6,10) rectangle (7, 11);
\filldraw[fill=black, draw=black] (8,10) rectangle (9, 11);
\filldraw[fill=black, draw=black] (9,10) rectangle (10, 11);
\filldraw[fill=black, draw=black] (9,11) rectangle (10, 12);
\filldraw[fill=black, draw=black] (7,12) rectangle (6, 13);
\filldraw[fill=black, draw=black] (7,12) rectangle (8, 13);
\filldraw[fill=black, draw=black] (7,13) rectangle (8, 14);
\filldraw[fill=black, draw=black] (8,13) rectangle (9, 14);
\filldraw[fill=black, draw=black] (5,14) rectangle (6, 15);
\filldraw[fill=black, draw=black] (8,14) rectangle (9, 15 );
\filldraw[fill=black, draw=black] (9,14) rectangle (10, 15);
\filldraw[fill=black, draw=black] (11,10) rectangle (12, 11);
\filldraw[fill=black, draw=black] (13,10) rectangle (14, 11);
\filldraw[fill=black, draw=black] (14,10) rectangle (15, 11);
\filldraw[fill=black, draw=black] (14,11) rectangle (15, 12);
\filldraw[fill=black, draw=black] (10,12) rectangle (11, 13);
\filldraw[fill=black, draw=black] (12,12) rectangle (13, 13);
\filldraw[fill=black, draw=black] (12,13) rectangle (13, 14);
\filldraw[fill=black, draw=black] (13,13) rectangle (14, 14);
\filldraw[fill=black, draw=black] (10,14) rectangle (11, 15);
\filldraw[fill=black, draw=black] (13,14) rectangle (14, 15 );
\filldraw[fill=black, draw=black] (14,14) rectangle (15, 15);
\filldraw[fill=black, draw=black] (16,5) rectangle (17, 6);
\filldraw[fill=black, draw=black] (18,5) rectangle (19, 6);
\filldraw[fill=black, draw=black] (19,5) rectangle (20, 6);
\filldraw[fill=black, draw=black] (19,6) rectangle (20, 7);
\filldraw[fill=black, draw=black] (15,7) rectangle (16, 8);
\filldraw[fill=black, draw=black] (17,7) rectangle (18, 8);
\filldraw[fill=black, draw=black] (17,8) rectangle (18, 9);
\filldraw[fill=black, draw=black] (18,8) rectangle (19, 9);
\filldraw[fill=black, draw=black] (15,9) rectangle (16, 10);
\filldraw[fill=black, draw=black] (18,9) rectangle (19, 10 );
\filldraw[fill=black, draw=black] (19,9) rectangle (20, 10);
\filldraw[fill=black, draw=black] (16,10) rectangle (17, 11);
\filldraw[fill=black, draw=black] (18,10) rectangle (19, 11);
\filldraw[fill=black, draw=black] (19,10) rectangle (20, 11);
\filldraw[fill=black, draw=black] (19,11) rectangle (20, 12);
\filldraw[fill=black, draw=black] (15,12) rectangle (16, 13);
\filldraw[fill=black, draw=black] (17,12) rectangle (18, 13);
\filldraw[fill=black, draw=black] (17,13) rectangle (18, 14);
\filldraw[fill=black, draw=black] (18,13) rectangle (19, 14);
\filldraw[fill=black, draw=black] (15,14) rectangle (16, 15);
\filldraw[fill=black, draw=black] (18,14) rectangle (19, 15);
\filldraw[fill=black, draw=black] (19,14) rectangle (20, 15);
\filldraw[fill=black, draw=black] (1,15) rectangle (2, 16);
\filldraw[fill=black, draw=black] (3,15) rectangle (4, 16);
\filldraw[fill=black, draw=black] (4,15) rectangle (5, 16);
\filldraw[fill=black, draw=black] (4,16) rectangle (5, 17);
\filldraw[fill=black, draw=black] (0,17) rectangle (1, 18);
\filldraw[fill=black, draw=black] (2,17) rectangle (3, 18);
\filldraw[fill=black, draw=black] (2,18) rectangle (3, 19);
\filldraw[fill=black, draw=black] (3,18) rectangle (4, 19);
\filldraw[fill=black, draw=black] (0,19) rectangle (1, 20);
\filldraw[fill=black, draw=black] (3,19) rectangle (4, 20);
\filldraw[fill=black, draw=black] (4,19) rectangle (5, 20);
\filldraw[fill=black, draw=black] (6,15) rectangle (7, 16);
\filldraw[fill=black, draw=black] (8,15) rectangle (9, 16);
\filldraw[fill=black, draw=black] (9,15) rectangle (10, 16);
\filldraw[fill=black, draw=black] (9,16) rectangle (10, 17);
\filldraw[fill=black, draw=black] (5,17) rectangle (6, 18);
\filldraw[fill=black, draw=black] (7,17) rectangle (8, 18);
\filldraw[fill=black, draw=black] (7,18) rectangle (8, 19);
\filldraw[fill=black, draw=black] (8,18) rectangle (9, 19);
\filldraw[fill=black, draw=black] (5,19) rectangle (6, 20);
\filldraw[fill=black, draw=black] (8,19) rectangle (9, 20);
\filldraw[fill=black, draw=black] (9,19) rectangle (10, 20);
\filldraw[fill=black, draw=black] (16,15) rectangle (17, 16);
\filldraw[fill=black, draw=black] (18,15) rectangle (19, 16);
\filldraw[fill=black, draw=black] (19,15) rectangle (20, 16);
\filldraw[fill=black, draw=black] (19,16) rectangle (20, 17);
\filldraw[fill=black, draw=black] (15,17) rectangle (16, 18);
\filldraw[fill=black, draw=black] (17,17) rectangle (18, 18);
\filldraw[fill=black, draw=black] (17,18) rectangle (18, 19);
\filldraw[fill=black, draw=black] (18,18) rectangle (19, 19);
\filldraw[fill=black, draw=black] (15,19) rectangle (16, 20);
\filldraw[fill=black, draw=black] (18,19) rectangle (19, 20);
\filldraw[fill=black, draw=black] (19,19) rectangle (20, 20);
\end{tikzpicture}
\caption{The set ${\cal W}_2$, constructed based on the above patterns ${\cal A}_1$ and ${\cal A}_2$, that
can also be viewed as a $20$-pattern} \label{fig:W2}
\end{center}
\end{figure}
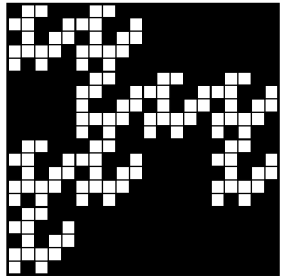

A non-empty $m$-pattern ${\cal A} \subseteq {{\cal S}_m}$, $m \ge 3$ is called a $m\times m$-\emph{labyrinth pattern} (in short, \emph{labyrinth pattern}) if  ${\cal A}$ has the tree property, the exits property and the corner property. 
\begin{property}[The tree property]\label{prop1}
$\mathcal{G}({\cal A})$ is a tree.
\end{property}
\begin{property}[The exits property]\label{prop2}
${\cal A}$ has exactly one vertical exit pair, and exactly one horizontal exit pair.
\end{property}
\begin{property}[The corner property]\label{prop3}
If there is a white square in ${\cal A}$ at a corner of ${\cal A}$, then there is no white square in ${\cal A}$ at the diagonal opposite corner of ${\cal A}$. 
\end{property}

Let $\{{\cal A}_k\}_{k=1}^{\infty}$ be a sequence of non-empty patterns,  with $m_k\ge 3$, $n\ge 1$ and ${\cal W}_n$ the corresponding set of white squares of level $n$. We call ${\cal W}_{n}$ an $m(n)\times m(n)$-\emph{mixed labyrinth set} (in short, \emph{labyrinth set}) of level $n$, if ${\cal A} ={\cal W}_{n}$ 
has the tree property, the exits property and the corner property.
It was shown \cite[Lemma 1]{mixlaby} that if all patterns in the sequence $\{{\cal A}_k\}_{k=1}^{\infty}$ are labyrinth patterns, then ${\cal W}_n$ is a labyrinth set, for any $n\ge 1$.
The limit set $L_{\infty}$ defined by a sequence $\{{\cal A}_k\}_{k=1}^{\infty}$ of labyrinth patterns is called \emph{mixed labyrinth fractal}. 

The fact that the labyrinth patterns have exits leads to the fact that any labyrinth fractal also has $4$ exits, which are defined as follows. Let $T_n\in {\cal W}_{n}$ be the top exit of ${\cal W}_{n}$, for $n\ge 1$. The \emph{top exit of} 
$L_{\infty}$ is $\bigcap_{n=1}^{\infty}T_n$. The other exits of $L_{\infty}$ are defined analogously. We note that the exits property
yields that $(x,1),(x,0)\in L_{\infty}$ if and only if $(x,1)$ is the top exit of $L_{\infty}$ and $(x,0)$ 
is the bottom exit of $L_{\infty}$. For the left and the right exits the analogue statement holds.

For more details on mixed labyrinth sets and mixed labyrinth fractals and for results on topological properties of mixed labyrinth fractals we refer to the papers \cite{mixlaby,arcs_mixlaby}. 

In the special case when in the above construction $\Aa_k= \Aa$, for all $k\ge 1$, i.e., the sequence of labyrinth patterns is a constant sequence, we obtain a self-similar labyrinth fractal, like those studied initially \cite{laby_4x4,laby_mxm}.

The following result was proven both in the case of self-similar labyrinth fractals \cite[Theorem 1]{laby_4x4}, and in the case of mixed labyrinth fractals \cite[Theorem 1]{mixlaby}.

\begin{theorem}\label{theo:dendrite}
Every (self-similar or mixed) labyrinth fractal is a dendrite.
\end{theorem}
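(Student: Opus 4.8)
The plan is to invoke the classical characterisation of a \emph{dendrite} as a locally connected continuum that contains no simple closed curve, and to verify its three ingredients --- that $L_\infty$ is a nonempty compact connected set, that it is locally connected, and that it contains no simple closed curve --- by transporting the tree structure of the finite graphs $\mathcal{G}(\mathcal{W}_n)$ to the limit. (Equivalently one could aim directly at unique arcwise connectedness, or at hereditary unicoherence together with local connectedness, but the route via simple closed curves keeps the combinatorics cleanest.)

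That $L_\infty$ is a continuum is immediate: each $L_n=\bigcup_{W\in\mathcal{W}_n}W$ is a finite union of closed squares of side $1/m(n)$ whose edge-adjacency graph is, by the tree property of the labyrinth set $\mathcal{W}_n$, connected; hence every $L_n$ is a nonempty compact continuum, and since $L_1\supseteq L_2\supseteq\cdots$ the intersection $L_\infty=\bigcap_n L_n$ is a nonempty continuum. For local connectedness I would use the fact that a continuum is locally connected if and only if for every $\varepsilon>0$ it is a finite union of connected sets of diameter $<\varepsilon$ (Property S). Given $\varepsilon$, choose $n$ with $\sqrt2/m(n)<\varepsilon$ and write $L_\infty=\bigcup_{W\in\mathcal{W}_n}(L_\infty\cap W)$. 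For a fixed white square $W$ of level $n$, the white squares of level $k\ge n$ contained in $W$ form, after the affine rescaling $P_W^{-1}$, exactly the level-$(k-n)$ white-square set of the mixed construction driven by the shifted pattern sequence $\mathcal{A}_{n+1},\mathcal{A}_{n+2},\dots$; by Lemma~1 of \cite{mixlaby} each of these is again a labyrinth set, so $L_k\cap W$ is a continuum for every $k\ge n$, and therefore $L_\infty\cap W=\bigcap_{k\ge n}(L_k\cap W)$ is a nonempty continuum of diameter $\le\sqrt2/m(n)<\varepsilon$. Thus $L_\infty$ has Property S and is locally connected.

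The heart of the proof is the absence of a simple closed curve. Suppose $J\subseteq L_\infty$ were one, and fix $n$ large enough that $J$ meets at least two white squares of level $n$. Travelling once around $J\cong S^1$ and recording the visited white squares of level $n$ in cyclic order produces a closed cyclic list in which consecutive squares either share a side or touch only at a corner. I would first argue that corner contacts are never essential: if two white squares $V_1,V_2$ of level $n$ lie diagonally and share only a corner point $x$, then the two remaining squares at $x$ cannot both be white by the tree property, and if exactly one of them is white the induced separation necessarily produces a genuine edge; so only the purely diagonal case must be excluded, and there the corner property applied to the pattern $\mathcal{A}_{n+1}$ shows that at most one of $V_1,V_2$ carries a white square of level $n+1$ incident to $x$. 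Iterating this, either $x\notin L_\infty$ (so $J$ cannot pass through $x$ at all) or $L_\infty$, hence $J$, meets a neighbourhood of $x$ inside only one of the two squares; either way the corner contact is deleted from the cyclic list. After also cancelling backtracks forced by the exits property --- a white square traversed by an arc of $J$ is entered and left through its unique vertical or unique horizontal exit pair --- one is left with a reduced closed edge-walk in the tree $\mathcal{G}(\mathcal{W}_n)$, which forces the whole of $J$ into a single white square of level $n$, contradicting the choice of $n$. Hence $L_\infty$ contains no simple closed curve and is a dendrite.

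I expect the genuinely technical point to be the reduction in the previous paragraph: establishing, uniformly in $n$, that the way a connected set threads through the white squares of level $n$ is faithfully encoded by the tree $\mathcal{G}(\mathcal{W}_n)$. Concretely I would isolate a lemma to the effect that any connected subset of $L_\infty$ meeting two white squares of level $n$ must meet every white square on the unique $\mathcal{G}(\mathcal{W}_n)$-path joining them, and that an arc of $J$ inside a white square enters and leaves it only through its exits; the obstacle is combining these with a compactness argument across levels so as to rule out the pathology in which $J$ ``dips in and out'' of squares at every level without the associated walk ever closing into a cycle. Once this combinatorial-to-topological transfer is in place, Theorem~\ref{theo:dendrite} follows from the characterisation of dendrites recalled above.
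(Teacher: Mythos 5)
The first thing to note is that the paper does not actually prove Theorem~\ref{theo:dendrite}: it is quoted from earlier work (\cite{laby_4x4}, Theorem~1, for the self-similar case and \cite{mixlaby}, Theorem~1, for the mixed case), so your proposal has to be judged on its own merits rather than against an argument in the present text. Your first two steps are sound and standard: $L_\infty$ is a nested intersection of continua (each $L_n$ is a finite union of closed squares whose adjacency graph is connected by the tree property), and local connectedness follows from Property~S. One small wrinkle: the piece $L_k\cap W$ is not quite the union of the level-$k$ white squares inside $W$, because white squares of level $k$ sitting in neighbouring level-$n$ squares contribute slivers of $\partial W$; the clean objects to use are the sub-fractals $F_W=\bigcap_{k\ge n}\bigl(\bigcup\{V\in\mathcal{W}_k:V\subseteq W\}\bigr)$, which are continua of diameter at most $\sqrt2/m(n)$ and still cover $L_\infty$.

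The genuine gap is in the exclusion of simple closed curves, at the final step. From ``the cyclic list of visited level-$n$ squares reduces to a closed walk in the tree $\mathcal{G}(\mathcal{W}_n)$'' you conclude that $J$ lies in a single square. That inference is invalid: a closed walk in a tree merely backtracks, and backtracking is perfectly compatible with $J$ visiting several squares --- $J$ can pass from $W$ into an adjacent $W'$ through one point $p$ of their common side and return later through a different point $q\ne p$ of the same side, producing the list $W,W',W,\dots$ with no violation of injectivity. Relatedly, the exits property is misapplied: Property~2 constrains the exit pairs of the whole pattern relative to the unit square, not how an arc enters and leaves an individual white square, which a priori can happen through any side shared with a white neighbour and at many points of that side. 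The fact that actually closes the argument, and which your sketch does not articulate, is that for side-adjacent white squares $W,W'$ of level $n$ the sub-fractals satisfy $F_W\cap F_{W'}=\{pt\}$, a single point: the tree property of $\mathcal{W}_{n+1}$ forces exactly one level-$(n+1)$ adjacency across the common side (two such adjacencies would create a cycle in $\mathcal{G}(\mathcal{W}_{n+1})$), then exactly one of level $n+2$ inside it, and so on, so the common points form a nested intersection of segments of length $1/m(k)\to 0$. Granting this, choose $n$ with $\sqrt2/m(n)$ much smaller than $\mathrm{diam}(J)$, let $W$ be a leaf of the (connected, induced) subtree of level-$n$ squares met by $J$ with unique neighbour $W'$ in that subtree, and delete the single point $F_W\cap F_{W'}$ (corner contacts having already been eliminated by your corner-property argument, which is correct): this disconnects $J$ into two nonempty relatively clopen pieces, contradicting the fact that a simple closed curve minus one point is connected. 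With that single-point-intersection lemma inserted, your outline becomes a complete proof along the lines of \cite{laby_4x4,mixlaby}; without it, the decisive step does not go through.
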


 We recall that a dendrite is a connected compact Hausdorff space, that is locally connected and contains no simple closed curve.
By Theorem \ref{theo:dendrite} and a result in Kuratowski's book \cite[Corollary~2, p. 301]{Kuratowski} it follows that between any pair of points $x\neq y$ in the dendrite $L_{\infty}$ there is a unique arc in the fractal that connects them.

It was shown \cite{laby_4x4,laby_mxm,mixlaby,arcs_mixlaby} that certain patterns, called blocked labyrinth patterns, generate fractals with interesting properties concerning the lengths of arcs in the fractal. More precisely, a labyrinth pattern is called \emph{horizintally blocked}, if in the row that contains the horizontal exit pair of the pattern there exists at least one black square. Analogously, a labyrinth pattern is \emph{vertically blocked}, if there exists at least one black square in the column that contains the vertical exit pair of the pattern. Figures \ref{fig:A1A2}, \ref{fig:W2} and \ref{fig:ex:special_cross} show examples of horizontally and vertically blocked labyrinth patterns.

In the self-similar case the following facts  were proven \cite[Theorem 3.18]{laby_mxm}:
\begin{theorem} \label{theo:main_self-similar}Let $L_{\infty}$ be the (self-similar) labyrinth fractal generated by a horizontally and 
vertically blocked $m\times m$-labyrinth pattern. 
Between any two points in $L_{\infty}$ there is a unique arc ${a}$.
The length of ${a}$ is infinite.
The set of all points, at which no tangent to ${a}$ exists, 
is dense in ${a}$.
\end{theorem}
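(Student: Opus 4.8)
The first assertion needs nothing new: by Theorem~\ref{theo:dendrite} the set $L_\infty$ is a dendrite, and the cited corollary from Kuratowski's book provides, for any two points of a dendrite, a unique arc joining them inside the dendrite. So the substance is the infinitude of the length of $a$ and the density of the set of points of $a$ at which $a$ has no tangent, and for both I would lean on the self-similar structure: for each white square $W$ of level $n$ the set $L_\infty\cap W$ is the image of $L_\infty$ under a similarity $f_W$ of ratio $m^{-n}$, and $W$ inherits four \emph{exits} (the images under $f_W$ of the four exits of $L_\infty$), which subdivide again at every deeper level. The plan is to reduce both claims to statements about the finitely many \emph{exit-to-exit arcs} of $L_\infty$ (the arcs joining two of its four exits) and then to analyse those by descending through the levels, using the blocked hypothesis.

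\emph{Reduction.} Fix $x\neq y$. Since the diameters $m^{-n}\sqrt2$ of the level-$n$ white squares tend to $0$, there is a level $n_0$ at which $x$ and $y$ lie in distinct white squares that are not even neighbours in $\mathcal G(\mathcal W_{n_0})$; as $\mathcal G(\mathcal W_{n_0})$ is a tree, the path through whose union $a$ runs then has an interior vertex $Q$, and $a$ enters and leaves $Q$ through two \emph{different} exits of $Q$ (each such crossing point is forced onto the single point of $L_\infty$ on the corresponding side of $Q$ by the exits property). By self-similarity $a\cap Q=f_Q(\beta)$ with $\beta$ an exit-to-exit arc of $L_\infty$. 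Hence it suffices to prove: (i) every exit-to-exit arc $\beta$ of $L_\infty$ has infinite length; (ii) on every exit-to-exit arc $\beta$ the set of points without tangent is dense in $\beta$. Both properties pass from $\beta$ to $f_Q(\beta)\subseteq a$.

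\emph{Infinite length.} Fix an exit-to-exit arc $\beta$. For each $n$ it runs through a finite path $\pi_n$ of level-$n$ white squares, and inside each square of $\pi_n$ it realises one of the six exit-to-exit crossings of a scaled copy of $\mathcal A$. Classifying the squares of $\pi_n$ by crossing type makes the passage from level $n$ to level $n+1$ a linear substitution $\mathbf v_{n+1}=M\mathbf v_n$, where $\mathbf v_n\in\mathbb N^6$ counts the squares of $\pi_n$ of each type and the nonnegative integer matrix $M$ records how many cells of each type the corresponding crossing of $\mathcal A$ is built from. The length of the level-$n$ approximation of $\beta$ is at least a fixed multiple of $\|\mathbf v_n\|\,m^{-n}$ (a ``straight'' crossing of a level-$n$ cell already contributes a segment of length $\ge m^{-n}$, and a corner crossing, being itself a scaled exit-to-exit arc, is no shorter than the straight crossings it contains). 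So it is enough to prove $\rho(M)>m$: then $\|\mathbf v_n\|$ grows like $\rho(M)^n$ and the approximations have length growing like $(\rho(M)/m)^n\to\infty$, forcing $\operatorname{length}(\beta)=\infty$. This spectral estimate is the technical heart, and the point at which the blocked hypothesis enters: since $\mathcal A$ is horizontally blocked, its left-to-right crossing cannot stay in its exit row (which contains a black square), and since joining the two side columns already needs $m$ cells while any excursion out of and back into that row costs two more, this crossing has at least $m+2$ cells; symmetrically for the top-to-bottom crossing. One must then combine the tree property (no $4$-cycle in $\mathcal G(\mathcal A)$, which rules out ``tight'' zig-zags), the corner property, and the uniqueness of the exit pairs to show that these surplus cells are genuine straight sub-crossings that reproduce at every level, so that iterating drives the number of straight sub-crossings per unit length above $m$ for all six crossing types simultaneously. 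I expect assembling this combinatorial lower bound — in particular handling the corner crossings, not just the two ``straight'' ones — to be the main obstacle.

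\emph{No tangent on a dense set.} The points of $\beta$ that are exits of some white square of some level form a countable set, so their complement $D$ is dense in $\beta$. Let $p\in D$. For every $n$, $p$ lies in the interior of a unique level-$n$ white square $W_n$, with $W_{n+1}\subset W_n$, and $\beta\cap W_n=f_{W_n}(\gamma_n)$ for some exit-to-exit arc $\gamma_n$, with $p$ an interior point of $\beta\cap W_n$. Writing $q_n\in(0,1)^2$ for the position of $p$ relative to $W_n$, one has $q_n=m^{-1}\big((i_n,j_n)+q_{n+1}\big)$, where $(i_n,j_n)$ is the index of $W_{n+1}$ inside $W_n$; thus $(q_n)_n$ is the sequence of base-$m$ tails of the coordinates of $p$. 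The direction from $p$ to the point at which $\beta$ leaves $W_n$ in a fixed (say forward) direction is $(e_n-q_n)/|e_n-q_n|$, where $e_n$ is one of the four fixed exit positions of $L_\infty$ in $[0,1]^2$; a one-sided tangent at $p$ would force this to converge as $n\to\infty$. Because $\mathcal A$ is horizontally and vertically blocked, the crossing of any copy of $\mathcal A$ bends away from its chord, so $\beta$ is not eventually straight near any of its points; consequently, for a dense set of $p\in\beta$ the tails $(q_n)_n$ fail to converge (they are not eventually constant and, by the bending, are not confined to a line through $\bigcup_n e_n$), so $(e_n-q_n)/|e_n-q_n|$ does not converge and $\beta$ has no tangent at such $p$. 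Density of these $p$ gives (ii), and combined with (i) and the reduction this proves the theorem.
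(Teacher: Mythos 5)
First, note that the paper you were given does not prove this theorem at all: it is quoted verbatim from earlier work (\cite{laby_mxm}, Theorem 3.18), so there is no in-paper proof to compare against. Judged on its own, your proposal correctly reproduces the architecture of the known proof --- uniqueness from the dendrite property via Kuratowski, reduction of a general arc $a(x,y)$ to a scaled exit-to-exit arc inside an interior cell $Q$ of the path in $\G(\W_{n_0})$, and a $6\times 6$ substitution (path) matrix $M$ recording how each exit-to-exit crossing of $\Aa$ decomposes into crossing types --- but it stops short of the two steps that carry all the mathematical content, so as written it is an outline, not a proof.

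Concretely: (a) For infinite length, everything hinges on showing that $\|\mathbf v_n\|/m^n\to\infty$. Your blockedness count (a left--right crossing confined to the exit row is impossible, hence at least $m+2$ cells, and symmetrically for top--bottom) is correct, but it only bounds two of the six crossing types; for the four corner crossings blockedness gives nothing directly, and you must show that every crossing type contains, at every level, a positive proportion of straight sub-crossings so that the surplus propagates --- you explicitly flag this as ``the main obstacle'' and do not resolve it, nor do you address whether $M$ is irreducible or whether the initial type vector has a component along the dominant eigendirection, both of which matter if you want to argue via $\rho(M)>m$. (b) The no-tangent argument is not valid as stated: ``the crossing bends away from its chord'' does not imply absence of a tangent (a $C^1$ curve can be nowhere straight), and non-convergence of the base-$m$ tails $(q_n)$ neither follows from what you prove nor implies non-convergence of the secant directions $(e_n-q_n)/|e_n-q_n|$. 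What is actually needed is a uniform lower bound, at infinitely many scales around a dense set of points $p$, on the angle between two secants from $p$ to nearby points of the arc --- obtained from the fact that the arc is forced to make right-angle turns inside every sufficiently deep cell containing $p$ --- and your sketch does not supply this.
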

Moreover, it was proven, that in this case the box counting dimension of all arcs in the fractal is strictly greater than $1$. Moreover, these arcs all have box counting and Hausdorff dimension strictly less than $2$, which easily follows, e.g., from the fact that the dimension (both box counting and Hausdorff) of the whole fractal is strictly less than $2$, see \cite{laby_4x4, laby_mxm}. 

In the mixed case, it was shown \cite{arcs_mixlaby} that the fact that the patterns are blocked is not sufficient in order to obtain all arcs with infinite length. 

Very recent research \cite[Theorem 5]{supermix} reveals the following  sufficient but not necessary condition in order to obtain a mixed labyrinth fractal where all (non-trivial) arcs in the fractal have infinite length.

\begin{theorem}\label{theo:sufficient_cond_infinite_arcs}
 If the width-sequence $\{m_k\}_{k\ge 1}$ of the sequence $\{\Aa_k\}_{k\ge 1}$ of both horizontally and vertically blocked labyrinth patterns satisfies the condition
\begin{equation}\label{eq:sufficient_cond_infinite_arcs}
\sum_{k\ge 1}^{\infty}\frac{1}{m_k}=\infty,
\end{equation}
then the mixed labyrinth fractal $\linf$ has the property that for any two distinct points $x,y\in \linf$ the length of the arc in $\linf$ that connects $x$ and $y$ is infinite.
\end{theorem}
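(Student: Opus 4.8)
\noindent The plan is to fix distinct points $x,y\in\linf$, follow the arc $a=a(x,y)$ through the levels of the construction, and show its length is infinite by comparing successive polygonal approximations. By Theorem~\ref{theo:dendrite} the arc $a$ is well defined, and it is non-degenerate since $x\ne y$. For each $n$ let $\W_n(a)\subseteq\W_n$ be the set of white squares of level $n$ that meet $a$; as is known for mixed labyrinth fractals \cite{mixlaby,arcs_mixlaby}, these squares form a simple path in the tree $\G(\W_n)$ from a square containing $x$ to one containing $y$. Writing $p_n=\#\W_n(a)$, the union $\bigcup_{W\in\W_n(a)}W$ is a connected chain of $p_n$ squares of side $1/m(n)$ containing $a$, so $\sqrt2\,p_n/m(n)\ge\mathrm{diam}(a)\ge|x-y|$ and hence $p_n\to\infty$. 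Next, let $P_n$ be the polygon inscribed in $a$ whose vertices are the consecutive points at which $a$ passes from one square of $\W_n(a)$ to the next (these are single points, by the dendrite property and the exits property). These inscriptions refine as $n$ grows and their meshes tend to $0$, so $\mathrm{len}(P_n)\nearrow\mathrm{len}(a)$; it therefore suffices to prove $\mathrm{len}(P_n)\to\infty$. Note that inside an interior square $W\in\W_n(a)$ the arc $a$ restricts to a crossing arc of the mixed labyrinth fractal generated by the tail $\{\Aa_k\}_{k>n}$, rescaled to $W$, that is, an arc between two of its four exits.

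The crux is a refinement estimate: there is an absolute constant $c>0$ such that, for all sufficiently large $n$,
\begin{equation}\label{eq:refinement}
\mathrm{len}(P_n)\ \ge\ \Bigl(1+\frac{c}{m_n}\Bigr)\mathrm{len}(P_{n-1}).
\end{equation}
I would derive this from the structure of blocked labyrinth patterns: because $\Aa_n$ is both horizontally and vertically blocked, a crossing arc cannot be refined ``straight'', since the black squares lying in the blocked row and in the blocked column force every level-$n$ refinement of a crossing to take a detour whose length is of order $1/m_n$ relative to the size of the square it sits in; the tree property pins down this detour, and the corner property prevents the refined arc from collapsing onto a short ``diagonal of corner points''. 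This is in essence the argument of \cite[Theorem~5]{supermix}, which extends the self-similar case \cite[Theorem~3.18]{laby_mxm}. Granting \eqref{eq:refinement}, iterating from some level $n_0$ gives
\[
\mathrm{len}(a)\ \ge\ \mathrm{len}(P_n)\ \ge\ \mathrm{len}(P_{n_0})\prod_{k=n_0+1}^{n}\Bigl(1+\frac{c}{m_k}\Bigr),
\]
and since $m_k\ge3$ the divergence of $\prod_k\bigl(1+c/m_k\bigr)$ is equivalent to $\sum_k 1/m_k=\infty$; letting $n\to\infty$ yields $\mathrm{len}(a)=\infty$.

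The main obstacle is exactly the refinement estimate \eqref{eq:refinement}. When $\Aa_n$ is inserted, a crossing arc splits into crossings of many sub-fractals, some of which are ``bent'' (joining two adjacent sides) and can individually be very short — indeed nearly degenerate — so one cannot argue square by square; instead one must show that their \emph{total} length beats the original crossing by the factor $1+c/m_n$. This requires using all three defining properties of labyrinth patterns simultaneously together with the blocking hypothesis, and keeping track of how the number of genuine side-to-opposite-side crossings in the trace $\W_n(a)$ evolves with $n$ — ruling out, in particular, that the arc escapes into a rectifiable regime by running along a diagonal of corner points, which is precisely what the corner property forbids.
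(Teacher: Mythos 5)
First, a point of reference: the paper does not prove this theorem at all --- it is quoted from \cite[Theorem 5]{supermix} --- so there is no in-paper argument to compare yours against. Judged as a standalone proof, your proposal has a reasonable global architecture: reduce everything to a per-level multiplicative gain of size $1+c/m_n$ and conclude from the fact that, since $c/m_k\le c/3$, the product $\prod_k(1+c/m_k)$ diverges exactly when \eqref{eq:sufficient_cond_infinite_arcs} holds. The outer layers are sound (uniqueness of the arc via the dendrite property, the trace $\W_n(a)$ forming a path in the tree $\G(\W_n)$, nested inscribed polygons with mesh tending to $0$ so that $\mathrm{len}(P_n)\nearrow\mathrm{len}(a)$), and your choice to measure progress by polygon length rather than by the square count $p_n$ (the currency used in \cite{laby_mxm,arcs_mixlaby,supermix}, where one bounds $\mathrm{len}(a)$ below by roughly $p_n/m(n)$ up to a constant) is a legitimate variant.

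However, the proof is not complete: the refinement estimate $\mathrm{len}(P_n)\ge(1+c/m_n)\,\mathrm{len}(P_{n-1})$ \emph{is} the theorem, and you state it without proof, explicitly deferring to \cite{supermix} --- the very source the paper cites for the whole statement. This is a genuine gap, not a routine verification, and you yourself identify why: a level-$(n-1)$ square of the trace that is crossed between two adjacent sides may be refined by a very short path near a corner of $\Aa_n$ with no forced detour, so the factor $1+c/m_n$ cannot be extracted square by square; and \cite{arcs_mixlaby} shows that blockedness alone does not force infinite length, so any correct argument must use the aggregate structure of the trace and the hypothesis quantitatively --- for instance by controlling, uniformly in $n$, the proportion of opposite-side crossings among the squares of $\W_n(a)$, which is where the tree, exits and corner properties actually do their work. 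Until that bookkeeping is carried out (or the precise statement of \cite[Theorem 5]{supermix} is simply invoked as a black box, which is what the present paper does), the proposal is a correct reduction wrapped around an unproven core, i.e.\ an outline rather than a proof.
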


Below we recall a result \cite[Lemma 4]{mixlaby} that enables us to use paths in the graphs of labyrinth patterns in order to construct arcs in a labyrinth fractal $\linf$, both in the self-similar, and in the mixed case.
\begin{lemma}\label{lemma:arc_construction}  (Arc Construction) Let $a,b\in L_{\infty}$, where $a\neq b$. For all $n \ge 1$, there are $W_n(a),W_n(b)\in V(\mathcal{G}({\cal W}_{n}))$ such that 
\begin{itemize}
\item[(a)]$W_1(a)\supseteq W_2(a)\supseteq\ldots$,
\item[(b)]$W_1(b)\supseteq W_2(b)\supseteq\ldots$,
\item[(c)]$\{a\}=\bigcap_{n=1}^{\infty}W_n(a)$,
\item[(d)]$\{b\}=\bigcap_{n=1}^{\infty}W_n(b)$.
\item[(e)]The set $\bigcap_{n=1}^{\infty}\left(\bigcup_{W\in p_n(W_n(a),W_n(b))} W\right)$ is an arc between $a$ and $b$. 
\end{itemize}
\end{lemma}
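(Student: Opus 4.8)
The statement is a technical lemma that lets one build an arc between two prescribed points $a,b$ of a (mixed) labyrinth fractal out of graph-theoretic paths in the successive labyrinth sets $\mathcal{W}_n$. The natural route is to proceed constructively and level by level. First I would fix the nested sequences $W_n(a)$ and $W_n(b)$: since $\{L_n\}$ is a decreasing sequence of compact sets with $L_\infty=\bigcap_n L_n$, each point $a\in L_\infty$ lies in at least one white square of level $n$; pick such a $W_n(a)$, and arrange nestedness by always choosing $W_{n+1}(a)\subseteq W_n(a)$ (possible because the construction refines each white square of level $n$ into the image under $P$ of a white pattern, so every white square of level $n+1$ meeting $a$ sits inside a unique white square of level $n$). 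Because the side length of $W_n(a)$ is $1/m(n)\to 0$, the intersection $\bigcap_n W_n(a)$ is a single point, which must be $a$; this gives (a)–(d). The only subtlety is that the choice of $W_n(a)$ may not be unique when $a$ lies on the boundary between two white squares, but any consistent choice works.

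For part (e), write $p_n:=p_n(W_n(a),W_n(b))$ for the unique path in the tree $\mathcal{G}(\mathcal{W}_n)$ joining $W_n(a)$ to $W_n(b)$ — unique and well-defined because, by the remarks following Theorem~\ref{theo:dendrite}, $\mathcal{G}(\mathcal{W}_n)$ is a tree (this is exactly the tree property of the labyrinth set $\mathcal{W}_n$, established in \cite[Lemma~1]{mixlaby}). Set $F_n:=\bigcup_{W\in p_n}W$. The heart of the proof is to show that the sequence $\{F_n\}$ is (essentially) decreasing and that $\bigcap_n F_n$ is an arc. The key structural fact I would establish is a compatibility statement: the path $p_{n+1}$, viewed in $\mathcal{W}_{n+1}$, refines $p_n$ in the sense that every white square of level $n+1$ appearing in $p_{n+1}$ is contained in some white square of level $n$ appearing in $p_n$; equivalently, $F_{n+1}\subseteq F_n$. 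This follows from the fact that when we pass from level $n$ to level $n+1$, each white square $W\in p_n$ is subdivided according to a labyrinth pattern, and the path must traverse $W$ from the side shared with its predecessor in $p_n$ to the side shared with its successor — and by the exits property each such pattern has exactly one exit pair in each direction, so the sub-path through $W$ is forced to connect the corresponding entry and exit sub-squares and stays within $W$. At the two ends $W_n(a),W_n(b)$ one uses instead that $a\in W_{n+1}(a)\subseteq W_n(a)$, so the refined path still starts and ends in the right place. Hence $\{F_n\}$ is a nested sequence of non-empty compact connected sets, so $A:=\bigcap_n F_n$ is a non-empty compact connected subset of $L_\infty$ containing $a$ and $b$.

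It remains to upgrade "compact connected" to "arc". Here I would invoke that $L_\infty$ is a dendrite (Theorem~\ref{theo:dendrite}) together with Kuratowski's result quoted in the excerpt: in a dendrite there is a unique arc $[a,b]$ between $a$ and $b$, and moreover every subcontinuum of a dendrite containing $a$ and $b$ contains this arc. So $[a,b]\subseteq A$. For the reverse inclusion one shows $A$ contains no "extra" points, which amounts to showing $A$ has empty interior-like branching, or more directly that $A$ itself is an arc: a standard characterization says a subcontinuum of a dendrite is an arc iff it has exactly two non-cut points (its endpoints). I would argue that any point $c\in A$ other than $a,b$ is a cut point of $A$ by exhibiting, at each level, that $c$ lies in some white square $W\in p_n$ that is interior to the path $p_n$ (not an endpoint), so removing $c$ disconnects $F_n$ "above level $n$", and passing to the limit disconnects $A$; and that $a,b$ are non-cut points because they sit at the ends of every $p_n$. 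Then $A$ is an arc with endpoints $a,b$, hence $A=[a,b]$.

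**Main obstacle.** The delicate point is the compatibility/refinement claim $F_{n+1}\subseteq F_n$ — precisely, that the forced sub-path through each white square $W\in p_n$ at the next level genuinely stays inside $W$ and correctly matches up the entry and exit faces with those of the neighbouring squares in $p_n$. This is where the \emph{exits property} (Property~\ref{prop2}) and the tree property do the real work: one must check that in the subdivided pattern inside $W$, the unique horizontal exit pair and the unique vertical exit pair are exactly the sub-squares on the faces of $W$ through which $p_n$ enters and leaves, so that the level-$(n+1)$ path has no choice but to follow them, and that at a square where $p_n$ turns a corner, the relevant exit pair still connects the correct two adjacent faces. Handling the endpoint squares $W_n(a)$, $W_n(b)$ — where the path "enters" not from a neighbour but from the nested point $a$ or $b$ — requires a small separate argument using nestedness. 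Once this geometric bookkeeping is in place, everything else is either the standard finite-intersection property for nested compacta or a direct appeal to the dendrite structure of $L_\infty$.
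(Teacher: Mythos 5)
First, a framing remark: the paper does not prove this lemma at all --- it is quoted verbatim from \cite[Lemma 4]{mixlaby} --- so there is no in-paper proof to compare against. Your outline in fact reproduces the strategy of the proof in that reference (and of its self-similar ancestor in \cite{laby_4x4}): nested squares for the two endpoints, the refinement property $F_{n+1}\subseteq F_n$ for the unions of path squares, compactness to obtain a continuum, and a cut-point characterization to recognize that continuum as an arc. The step you correctly identify as the heart of the matter --- that the level-$(n+1)$ path is trapped inside $F_n$ --- is argued along the right lines: white sub-squares of two consecutive path squares are adjacent only at the unique exit pair of $\Aa_{n+1}$ on their common side (this is exactly Property \ref{prop2}), the tree property supplies a connecting sub-path inside each square, and the resulting walk in the tree $\G(\W_{n+1})$ must contain the unique path $p_{n+1}$.

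Two points need tightening before this is a proof. (i) The nested choice of $W_n(a)$ is not as automatic as your parenthetical suggests: greedily picking any white square of level $n$ containing $a$ can strand you, since that square need not contain white squares of all deeper levels containing $a$ (e.g.\ when $a$ sits on a common corner of several white squares). One needs a K\"onig-type selection --- at each level choose a square containing infinitely many members of the family of white squares containing $a$ --- which is precisely how the present paper sets up the notation $W_n(x)$ just before Theorem \ref{theo:all_arcs_dimB=2}. (ii) The final step, ``removing $c$ disconnects $F_n$ above level $n$, and passing to the limit disconnects $A$,'' is not a valid inference as stated: a decreasing intersection of disconnected compacta can perfectly well be connected, so nothing is ``passed to the limit'' here. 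The correct argument exhibits, for a suitable level $n$, an explicit separation of $A\setminus\{c\}$ into two nonempty relatively closed pieces, namely $A$ intersected with the union of the path squares preceding (respectively following) the position of $c$ in $p_n$; one must then check that these two closed sets meet only in $\{c\}$, which again uses the exits property (the refined paths cross the common side of two consecutive path squares through a nested sequence of exit sub-squares whose intersection is a single point of $\linf$). With (i) and (ii) repaired, and with the classical characterization of arcs as metric continua having exactly two non-cut points invoked explicitly, your outline matches the cited proof and is complete.
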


Until now, only one result with respect to fractal dimension of mixed labyrinth fractals was proven \cite[Proposition 6, p.121]{mixlaby}:
\begin{proposition}  
If $a$ is an arc between the top and the bottom exit in $L_{\infty}$ then 
\[
\liminf_{n\rightarrow \infty} \frac{\log(\A(n))}{\sum_{k=1}^{n}\log (m_k)}= \underline{\dim}_B(a)\le\overline{\dim}_B(a)= \limsup_{n\rightarrow \infty} \frac{\log(\A(n))}{\sum_{k=1}^{n}\log (m_k)},
\]
where $\A(n)$ denotes the number od squares in the path in $\G(\W_n)$ that connects the top and the bottom exit of $\W_n$.
For the other pairs of exits, the analogue statement holds. 
\end{proposition}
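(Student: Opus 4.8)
The plan is to read off the covering numbers of the arc at the special scales $1/m(n)$ from the level-$n$ combinatorics, and then to interpolate to all scales. First I identify the arc. Since $a$ joins the top and the bottom exit of $\linf$, we have $a=(x,1)$ and $b=(x,0)$ with $\{(x,1)\}=\bigcap_{n\ge1}T_n$ and $\{(x,0)\}=\bigcap_{n\ge1}B_n$, where $T_n,B_n\in\W_n$ are the top and the bottom exit square of $\W_n$. By the uniqueness of arcs in the dendrite $\linf$ (Theorem~\ref{theo:dendrite}) and Lemma~\ref{lemma:arc_construction}, whose distinguished squares $W_n(a),W_n(b)$ are here forced to be $T_n$ and $B_n$, and using that $\G(\W_n)$ is a tree so that $p_n:=p_n(T_n,B_n)$ is the unique path in $\G(\W_n)$ joining them, we obtain
\[
a=\bigcap_{n\ge1}\Bigl(\bigcup_{W\in p_n}W\Bigr),
\]
where $p_n$ consists of exactly $\mathcal{A}(n)$ cells of $\mathcal{S}_{m(n)}$, each of side $1/m(n)$; this $\mathcal{A}(n)$ is the quantity in the statement.

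Let $N(\varepsilon)$ be the least number of sets of diameter $\le\varepsilon$ covering $a$, so that $\overline{\dim}_B(a)=\limsup_{\varepsilon\to 0^+}\log N(\varepsilon)/\log(1/\varepsilon)$ and $\underline{\dim}_B(a)=\liminf_{\varepsilon\to 0^+}\log N(\varepsilon)/\log(1/\varepsilon)$. Covering $a$ by the closed cells of $p_n$ (each split into four pieces to get diameters $<1/m(n)$) gives $N(1/m(n))\le 4\,\mathcal{A}(n)$. For the opposite bound, an induction using Lemma~\ref{lemma:arc_construction} shows that the part of $p_{n+1}$ inside any cell $W\in p_n$ is a path in a rescaled copy of $\G(\Aa_{n+1})$ joining two exit squares of $W$, so $a$ meets every cell of $p_n$; since a set of diameter $\le 1/m(n)$ meets at most four cells of $\mathcal{S}_{m(n)}$, this forces $N(1/m(n))\ge\mathcal{A}(n)/4$. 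As $\log m(n)=\sum_{k=1}^n\log m_k\to\infty$, the factors $4$ are immaterial, so evaluating the quotient along the scales $\varepsilon=1/m(n)$ already yields
\[
\overline{\dim}_B(a)\ge\limsup_{n\to\infty}\frac{\log\mathcal{A}(n)}{\sum_{k=1}^n\log m_k},\qquad\underline{\dim}_B(a)\le\liminf_{n\to\infty}\frac{\log\mathcal{A}(n)}{\sum_{k=1}^n\log m_k}.
\]

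It remains to prove the two reverse inequalities, for which one must control $N(\varepsilon)$ at the intermediate scales $1/m(n+1)\le\varepsilon<1/m(n)$. Monotonicity gives $N(\varepsilon)\ge N(1/m(n))\ge\mathcal{A}(n)/4$, and since $a$ is connected with $\operatorname{diam}(a)=1$ also $N(\varepsilon)\ge 1/\varepsilon-1$. For an upper bound, refine inside each cell $W\in p_n$: the subarc $a\cap W$ lies in a path-shaped chain of $c_W$ cells of $\mathcal{S}_{m(n+1)}$, with $\sum_{W\in p_n}c_W=\mathcal{A}(n+1)$, and covering each such chain along its polyline (its cells have side $1/m(n+1)\le\varepsilon$) yields $N(\varepsilon)\le C\bigl(\mathcal{A}(n)+\mathcal{A}(n+1)/(m_{n+1}\,m(n)\,\varepsilon)\bigr)$. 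Writing $\varepsilon=t/m(n)$ with $t\in[1/m_{n+1},1)$, and using the crude relations $\mathcal{A}(n)\le\mathcal{A}(n+1)\le m_{n+1}^2\mathcal{A}(n)$ and $m(n)\le\mathcal{A}(n)\le m(n)^2$, one optimises over $t$ and checks that $\log N(\varepsilon)/\log(1/\varepsilon)$ stays within $o(1)$ of the interval between $\liminf_n\log\mathcal{A}(n)/\log m(n)$ and $\limsup_n\log\mathcal{A}(n)/\log m(n)$; with the previous display this gives the stated equalities. For the other three pairs of exits the argument is the same, with $p_n(T_n,B_n)$ replaced by the appropriate path in $\G(\W_n)$.

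The main obstacle is exactly this interpolation: the scales $1/m(n)$ need not be comparable to their successors (the ratio $1/m_{n+1}$ may tend to $0$), so one cannot simply invoke the standard fact that the box dimension is read off along a geometric sequence of scales; one has to go through the approximating paths $p_{n+1}$ and use both the length-type upper bound for $N(\varepsilon)$ and the two-sided estimate $\mathcal{A}(n)\le\mathcal{A}(n+1)\le m_{n+1}^2\mathcal{A}(n)$. Everything else is routine once Lemma~\ref{lemma:arc_construction} and the tree structure of $\G(\W_n)$ are available.
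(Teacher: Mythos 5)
Your first half is correct, and it already contains everything the present paper itself offers for this statement (the paper does not prove the proposition; it imports it from \cite{mixlaby}). The identification $a=\bigcap_{n}\bigcup_{W\in p_n}W$ with $p_n=p_n(T_n,B_n)$ via the Arc Construction lemma and uniqueness of arcs in a dendrite, the fact that $a$ meets every cell of $p_n$, and the resulting two-sided estimate $\mathcal{A}(n)/4\le N(1/m(n))\le 4\,\mathcal{A}(n)$ are all sound; they give $\overline{\dim}_B(a)\ge\limsup_n\log\mathcal{A}(n)/\log m(n)$ and $\underline{\dim}_B(a)\le\liminf_n\log\mathcal{A}(n)/\log m(n)$. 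You are also right that the remaining two inequalities are the real content and that Falconer's subsequence criterion is not available: it requires $\delta_{k+1}\ge c\,\delta_k$, whereas here $\delta_{k+1}/\delta_k=1/m_{k+1}$ may tend to $0$.

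The gap is that your interpolation does not close. Put $M=m(n)$, $\mu=m_{n+1}$, $\varepsilon=t/M$ with $t\in[1/\mu,1)$, and suppose $\log\mathcal{A}(n)\approx d\log M$ and $\log\mathcal{A}(n+1)\approx d\log(M\mu)$ with $1<d<2$. At $t=\mu^{-1/2}$ your upper bound gives $N(\varepsilon)\lesssim M^{d}\mu^{\,d-1/2}$, so
\[
\frac{\log N(\varepsilon)}{\log(1/\varepsilon)}\;\lesssim\;\frac{d\log M+(d-\tfrac12)\log\mu}{\log M+\tfrac12\log\mu},
\]
which exceeds $d$ by roughly $\tfrac{d-1}{2}\,\log\mu/(\log M+\tfrac12\log\mu)$; this is not $o(1)$ unless $\log m_{n+1}=o\bigl(\sum_{k\le n}\log m_k\bigr)$. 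Symmetrically, your lower bounds $N(\varepsilon)\ge\mathcal{A}(n)/4$ and $N(\varepsilon)\ge 1/\varepsilon-1$ only yield a ratio tending to $\max(1,\,d\log M/(\log M+\tfrac12\log\mu))$, which drops below $d$ in the same regime. So the asserted check ``$\log N(\varepsilon)/\log(1/\varepsilon)$ stays within $o(1)$ of the interval'' fails for rapidly growing width sequences. This is not merely a weakness of your estimates: the number of $\varepsilon$-boxes met by the sub-path inside a level-$n$ cell at an intermediate resolution $j\approx\mu t$ depends on how the winding of $\Aa_{n+1}$ is distributed across scales (it can range between roughly $c_W/j^2$ and $c_W/j$ for the same path length $c_W$), so it is not determined by $\mathcal{A}(n+1)$ and $m_{n+1}$ alone. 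To make the proof work you must either impose and use a growth hypothesis such as $\log m_{n+1}/\log m(n)\to 0$ (which holds for bounded widths and for the $m_k=4k+7$ sequence used in this paper, via the elementary observation that $\log\delta_{n+1}/\log\delta_n\to 1$ suffices for passing to the subsequence of scales $1/m(n)$), or supply genuinely finer information about the patterns at intermediate scales; the optimisation over $t$ as you have set it up cannot be ``checked'' in general.
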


In the considerations to follow we aim to construct mixed labyrinth fractals whose box counting dimension can be established. 
First, we introduce a special sequence of labyrinth patterns, called snake cross patterns, which enables us to construct mixed labyrinth fractals with the property that the box counting dimension of any non-trivial arc (i.e., arc that connects two distinct points of the fractal) equals $2$.
Subsequently, we give a method for constructing mixed labyrinth fractals of any given (rational) box counting dimension between $1$ and $2$.
For more details on the box counting dimension and its properties we refer, e.g., to Falconer's book \cite{falconer_book}.

\section{Snake cross patterns and mixed labyrinth sets, and the fractals generated by them}\label{sec:snake_cross_patterns}
In recent research \cite{arcs_mixlaby}, a special family of labyrinth patterns, called ``special cross patterns'' were used in order to construct mixed labyrinth fractals with certain properties of their arcs. These patterns were chosen due to their symmetry properties, which make them convenient tools when studying lengths of arcs between exits of the fractal. Here we introduce a family of patterns called \emph{snake cross patterns}, whose name is due to the fact that their shape and some of their properties remind us of the special cross patterns, but, since here the length of the four ``arms'' of the cross is meant to be  as long as possible, they have in general more and much bigger windings than the special cross patterns. Figure \ref{fig:ex:special_cross} shows a speciall cross pattern, and in Figures \ref{fig:A1A2A3},\ref{fig:A4A5} and \ref{fig:colorA2} we see snake cross patterns.

\begin{figure}[h!]
\begin{center}
\begin{tikzpicture}[scale=.15]
\draw[line width=1.5pt] (0,0) rectangle (22,22);
\filldraw[fill=gray!30, draw=black] (10,10) rectangle (12, 12);
\draw[line width=0.5pt] (2, 0) -- (2,22);
\draw[line width=0.5pt] (4, 0) -- (4,22);
\draw[line width=0.5pt] (6, 0) -- (6,22);
\draw[line width=0.5pt] (8, 0) -- (8,22);
\draw[line width=0.5pt] (10, 0) -- (10,22);
\draw[line width=0.5pt] (12, 0) -- (12,22);
\draw[line width=0.5pt] (14, 0) -- (14,22);
\draw[line width=0.5pt] (16, 0) -- (16,22);
\draw[line width=0.5pt] (18, 0) -- (18,22);
\draw[line width=0.5pt] (20, 0) -- (20,22);
\draw[line width=0.5pt] (0, 2) -- (22,2);
\draw[line width=0.5pt] (0, 4) -- (22,4);
\draw[line width=0.5pt] (0, 6) -- (22,6);
\draw[line width=0.5pt] (0, 8) -- (22,8);
\draw[line width=0.5pt] (0, 10) -- (22,10);
\draw[line width=0.5pt] (0, 12) -- (22,12);
\draw[line width=0.5pt] (0, 14) -- (22,14);
\draw[line width=0.5pt] (0, 16) -- (22,16);
\draw[line width=0.5pt] (0, 18) -- (22,18);
\draw[line width=0.5pt] (0, 20) -- (22,20);
\filldraw[fill=black, draw=black] (0,0) rectangle (8, 10);
\filldraw[fill=black, draw=black] (8,8) rectangle (10, 10);
\filldraw[fill=black, draw=black] (8,0) rectangle (10, 2);
\filldraw[fill=black, draw=black] (4,10) rectangle (6, 12);
\filldraw[fill=black, draw=black] (0,14) rectangle (10, 22);
\filldraw[fill=black, draw=black] (0,12) rectangle (2, 17);
\filldraw[fill=black, draw=black] (8,12) rectangle (10, 14);
\filldraw[fill=black, draw=black] (10,16) rectangle (12, 18);
\filldraw[fill=black, draw=black] (12,20) rectangle (14, 22);
\filldraw[fill=black, draw=black] (12,12) rectangle (14, 14);
\filldraw[fill=black, draw=black] (14,12) rectangle (22, 22);
\filldraw[fill=black, draw=black] (12,8) rectangle (14, 10 );

\filldraw[fill=black, draw=black] (16,10) rectangle (18, 12);
\filldraw[fill=black, draw=black] (12,0) rectangle (22, 8);
\filldraw[fill=black, draw=black] (20,8) rectangle (22, 10);
\filldraw[fill=black, draw=black] (10,4) rectangle (12, 6);

\end{tikzpicture}
\caption{An example: a special cross pattern with width $11$}
\label{fig:ex:special_cross}
\end{center}
\end{figure}
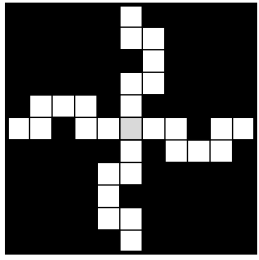

\begin{figure}[hhhh]
\begin{center}
\includegraphics[scale=0.7]{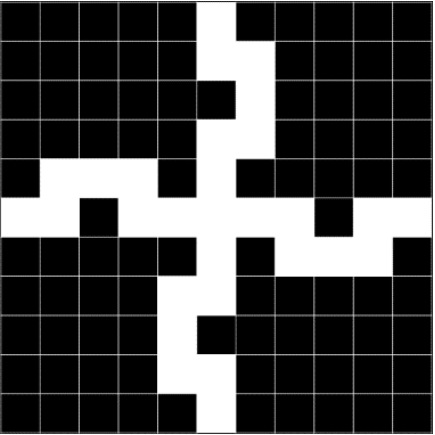}~~
\includegraphics[scale=0.7]{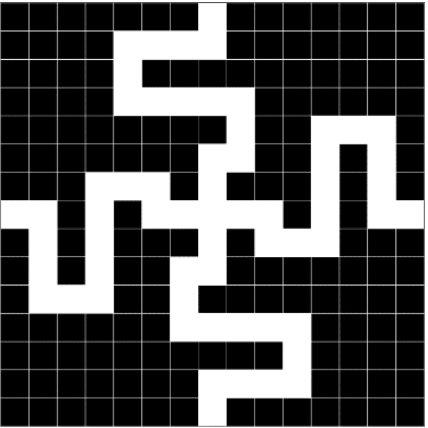}~~
\includegraphics[scale=0.7]{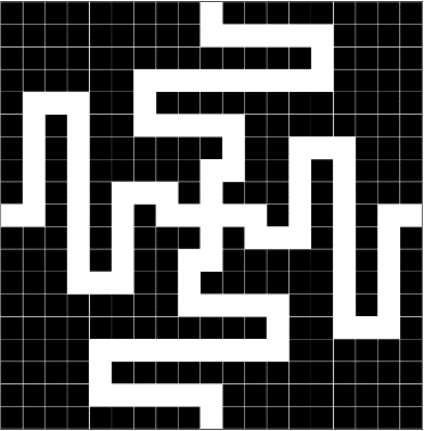}
\caption{The snake cross patterns $\Aa_1, \Aa_2, \Aa_3$ of width $11$, $15$, and $19$, respectively.} 
\label{fig:A1A2A3}
\end{center}
\end{figure}

\begin{figure}[!]
\begin{center}
\includegraphics[scale=0.7]{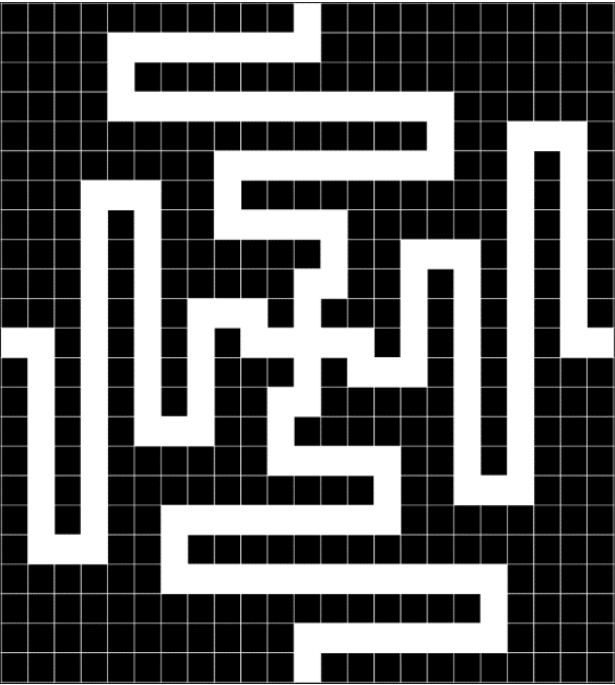}~~~~
\includegraphics[scale=0.7]{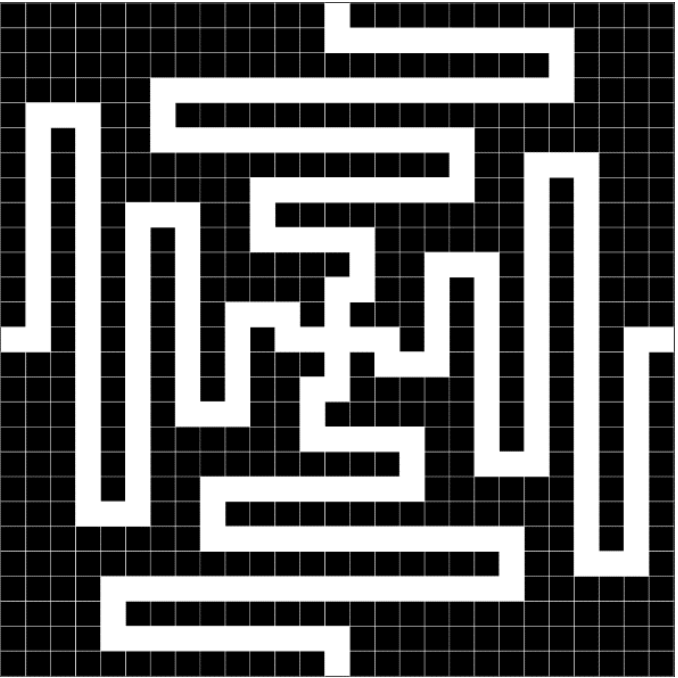}
\caption{The snake cross patterns $\Aa_4, \Aa_5$ of width $23$, and $27$, respectively. 
} 
\label{fig:A4A5}
\end{center}
\end{figure}

In order to define the sequence $\{ \Aa_k\}_{k\ge 1}$ of snake cross patterns, with corresponding width sequence $\{ m_k\}_{k\ge 1}$, let us first  set $m_k:=4k+7$, for all $k\ge 1$. Each pattern has a central square $C_k$, namely the white square that lies in row $2k+4$ and column $2k+4$, and which is coloured in white. Moreover, the top and bottom exit of any snake cross pattern lie in column $2k+4$, and the left and right exit lie in row $2k+4$.
In Figure \ref{fig:ex:special_cross} the central square of the pattern is coloured in light grey, in Figure \ref{fig:colorA2} the central square of the pattern is white (for better visibleness on the coloured figure).
 Four ``arms'' emerge from the central square. In Figure \ref{fig:colorA2} these are marked with different colours.
 The arms' shape, the windings on each of the four arms of the cross, resemble snakes in movement. The four arms are identical (up to rotation), and each of them can be obtained by the same procedure:
\begin{enumerate}
\item Start from the central square $C_k$.
\item The exit square on one of the sides of the unit square is selected as a distinguished vertex of $\G(\Aa_k)$, namely, the midpoint of one of its sides coincide with the midpoint of the mentioned side of the unit square.
\item  ``Move two steps'' (two neighbouring squares) towards the selected exit.
\item Turn to the right and move through pairwise neigbouring squares until either a diagonal (see Figure \ref{fig:colorA2}) is reached or a neighbour of the selected exit is reached. In the latter case stop the construction of the arm.
\item Turn to the left and ``move two steps''.
\item Turn to the left and move until either a diagonal is reached or a neighbour of the selected exit is reached. In the latter case stop the construction of the arm.
\item Turn to the right and ``move two steps''.
\item Repeat steps $4$ to $7$.
\end{enumerate}

\begin{figure}[h!]
\begin{center}
\includegraphics[scale=0.3]{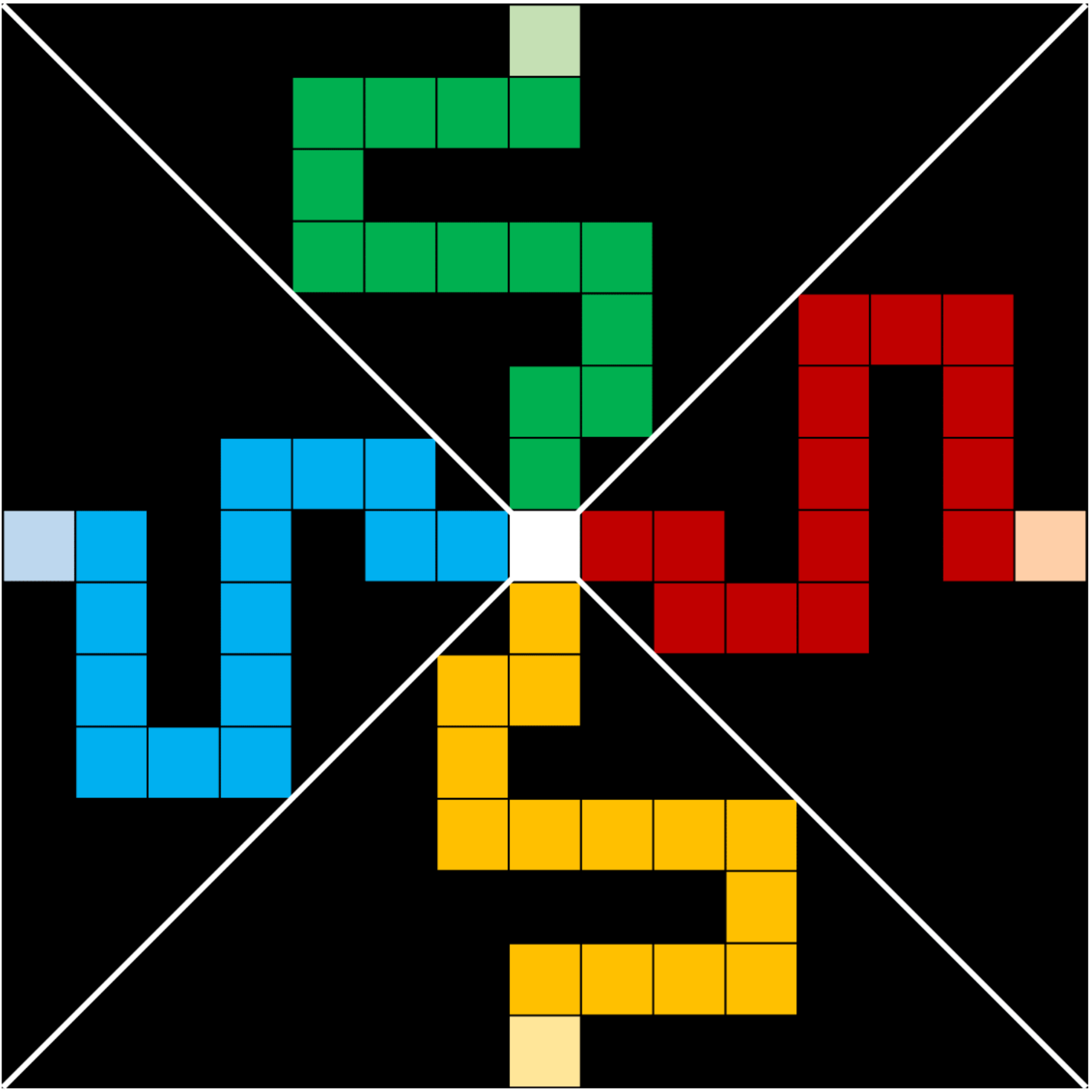}
\caption{The four arms, the exits and the central square in the snake cross pattern $\Aa_2$} 
\label{fig:colorA2}
\end{center}
\end{figure}

\begin{remark}
The $k$-th snake cross pattern has $k$ windings (``detours'') on each of its four arms and $2k$ black squares in the row  that contains its left and right exit, as well as in the column that contains its top and botom exit. We note, according to Figures \ref{fig:ex:special_cross} and \ref{fig:A1A2A3}, that the first snake cross pattern $\Aa_1$ is in particular a special cross pattern.
\end{remark}

It is easy to see that by replacing ``left'' with ``right'' and  viceversa in the above construction algorithm of a snake cross pattern we obtain an other labyrinth pattern. Moreover, due to its shape, we can see it just as the ``reflected'' initial pattern. We call the initial one (defined above) a \emph{right snake cross pattern} because the first turn was to the right, and its corresponding ``reflected'' pattern a \emph{left snake cross pattern}. Throughout this article we assume, without loss of generality, that all snake cross patterns mentioned below are right snake cross patterns.

 One can immediately see that the snake cross patterns are both horizontally and vertically blocked labyrinth patterns. One can use these patterns in order to define a mixed labyrinth fractal $\linf$: let ${\Aa_k}_{k\ge 1}$ be a sequence of snake cross patterns (as defined above) and $\linf$ the corresponding labyrinth fractal. We call $\linf$ a \emph{mixed snake labyrinth fractal}.

Based on Theorem \ref{theo:dendrite} and Theorem \ref{theo:sufficient_cond_infinite_arcs} one immediately obtains the proof of the following result.

\begin{proposition}\label{prop:properties_snakelabyfractal}
Let $\linf$ be a mixed snake labyrinth fractal. Then:
\begin{enumerate}
\item $\linf$ is a dendrite.
\item For any distinct points $x,y \in \linf$, the arc in $\linf$ that connects $x$ and $y$ has infinite length.
\end{enumerate}
\end{proposition}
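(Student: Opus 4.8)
The plan is to assemble Proposition \ref{prop:properties_snakelabyfractal} directly from the machinery already in place, since every snake cross pattern is a both horizontally and vertically blocked labyrinth pattern (as observed just above the statement). First I would verify this ``is a labyrinth pattern'' claim a little more carefully than the one-sentence remark: a snake cross pattern has width $m_k = 4k+7 \ge 11 \ge 3$, its graph $\G(\Aa_k)$ is a path (the four arms joined at the central square $C_k$, each arm a non-self-intersecting corridor of pairwise-neighbouring squares), hence a tree, so Property \ref{prop1} holds; by construction the only top--bottom aligned white squares are the two exits in column $2k+4$ and similarly the only left--right aligned pair sits in row $2k+4$, giving exactly one vertical and one horizontal exit pair, so Property \ref{prop2} holds; and the corner property (Property \ref{prop3}) is immediate because, by the construction algorithm, the arms terminate adjacent to the midpoints of the sides, so the four corner cells of the pattern are black. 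Thus each $\Aa_k$ is a labyrinth pattern, and ``blocked'' in both directions because each arm contains its $2k \ge 2$ black squares in the exit row/column.

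With that in hand, part 1 is nothing more than Theorem \ref{theo:dendrite}: a mixed snake labyrinth fractal is, by definition, the mixed labyrinth fractal $\linf$ generated by the sequence $\{\Aa_k\}_{k\ge 1}$ of labyrinth patterns, and every mixed labyrinth fractal is a dendrite. For part 2, I would invoke Theorem \ref{theo:sufficient_cond_infinite_arcs}: its hypotheses are exactly that the $\Aa_k$ are both horizontally and vertically blocked labyrinth patterns (checked above) and that the width-sequence satisfies $\sum_{k\ge 1} 1/m_k = \infty$. The latter is the one genuine computation: with $m_k = 4k+7$ one has $1/m_k = 1/(4k+7) \ge 1/(11k)$ for $k \ge 1$ (since $4k+7 \le 11k \Leftrightarrow 7 \le 7k$), and $\sum_k 1/(11k)$ diverges as a constant multiple of the harmonic series; hence $\sum_k 1/m_k = \infty$. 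Theorem \ref{theo:sufficient_cond_infinite_arcs} then yields that for any two distinct $x,y \in \linf$ the arc in $\linf$ joining them has infinite length, which is part 2.

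There is essentially no obstacle here; the ``hard part'', such as it is, lies entirely in the preliminary sanity check that the snake cross patterns really are both horizontally and vertically blocked labyrinth patterns — that the winding algorithm never makes an arm touch itself (so $\G(\Aa_k)$ stays a tree) and never creates a spurious second exit pair — and the rest is a citation of Theorems \ref{theo:dendrite} and \ref{theo:sufficient_cond_infinite_arcs} together with the one-line harmonic-series estimate $\sum_k 1/(4k+7) = \infty$. I would therefore present the proof as: (i) each $\Aa_k$ is a both horizontally and vertically blocked labyrinth pattern; (ii) apply Theorem \ref{theo:dendrite} for part 1; (iii) check $\sum 1/m_k = \infty$ and apply Theorem \ref{theo:sufficient_cond_infinite_arcs} for part 2.
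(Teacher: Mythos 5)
Your proposal is correct and follows exactly the paper's own route: the paper likewise obtains part 1 from Theorem \ref{theo:dendrite} and part 2 from Theorem \ref{theo:sufficient_cond_infinite_arcs}, using that the snake cross patterns are both horizontally and vertically blocked and that $\sum_{k\ge 1} 1/(4k+7)$ diverges. The only difference is that you spell out the verification of the labyrinth-pattern properties and the harmonic-series estimate, which the paper leaves implicit.
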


\section{Mixed labyrinth fractals with arcs of maximal box counting dimension}\label{sec:mixed_arcs_max_dim}

Now, our aim is to find the box counting dimension of arcs in a mixed snake labyrinth fractal. As a first step, we are interested in the arcs that connect exits of $\linf$. Therefore, let us consider the lengths of the paths between exits in the labyrinth sets $\W_n$, for $n\ge 1$. For $n\ge 1$, let $\A(n)$ denote the length of the path in $\G(\W_n)$ that connects the top and bottom exit of the labyrinth set $\W_n$, and, analogously use the notations $\B(n), \C(n), \D(n), \E(n), \F(n)$ for the lengths of the paths connecting the other six pairs of exits, respectively, as shown by the symbols. Moreover, for $k\ge 1$, let $\A_k, \B_k, \C_k, \D_k, \E_k, \F_k$ denote the lengths of the corresponding paths in $\G(\Aa_k)$. By the lengths of paths in the graphs we mean the number of graph vertices (i.e., the number of white squares) that lie on the path.

\begin{lemma}\label{lemma:arms_of_same_length}
With the above notations,
\[
\A(n)=\B(n)=\C(n)=\D(n)=\E(n)=\F(n)=:D(L_n), \text { for all } n\ge 1.
\]
\end{lemma}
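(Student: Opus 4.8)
The plan is to exploit the symmetry of snake cross patterns. Each snake cross pattern $\Aa_k$ is, by construction, invariant (as a subset of the unit square, and hence as a graph) under the rotations of the square by multiples of $90^\circ$: the central square $C_k$ sits on the center, the four arms are identical up to rotation, and each arm joins $C_k$ to one of the four exits, which lie in the middle row/column. Consequently the six pairs of exits of $\Aa_k$ fall into two orbit classes under the dihedral symmetry: the two ``opposite'' pairs (top--bottom and left--right), and the four ``adjacent'' pairs (top--left, top--right, bottom--left, bottom--right). First I would record this rotational symmetry explicitly for $\Aa_k$ and deduce that, within a single pattern, $\A_k=\B_k$ (the two opposite pairs) and $\C_k=\D_k=\E_k=\F_k$ (the four adjacent pairs).

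Next I would promote this from the pattern level to the level $\W_n$. Since every $\Aa_k$ has this $90^\circ$-rotational symmetry, and $\W_n$ is built by the substitution rule \eqref{eq:W_n} using patterns each of which has the symmetry, the set $\W_n$ (equivalently, the $m(n)\times m(n)$-pattern $\W_n$ together with its graph $\G(\W_n)$) is again invariant under $90^\circ$ rotation of the unit square. Here I would either argue directly that the substitution of a rotationally symmetric pattern into a rotationally symmetric arrangement of rotationally symmetric patterns is rotationally symmetric, or set it up inductively: $\W_1=\Aa_1$ is symmetric, and if $\W_{n-1}$ is symmetric then $\W_n=\bigcup_{W\in\Aa_n,\,W_{n-1}\in\W_{n-1}} P_{W_{n-1}}(W)$ is symmetric because rotating the unit square permutes the tiles $W_{n-1}\in\W_{n-1}$ consistently with rotating each $\Aa_n$-copy. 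By the exits property (Property~\ref{prop2}) each $\W_n$ has exactly one vertical and one horizontal exit pair, and the rotation by $90^\circ$ carries the vertical exit pair to the horizontal one; since $\G(\W_n)$ is a tree (Property~\ref{prop1}), there is a \emph{unique} path between any two exits, so the rotation maps the top--bottom path of $\W_n$ bijectively onto the left--right path, giving $\A(n)=\B(n)$; similarly a single rotation cyclically permutes the four adjacent exit pairs and their (unique) connecting paths, giving $\C(n)=\D(n)=\E(n)=\F(n)$. Counting vertices is preserved by the rotation, so these are equalities of path lengths.

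It then remains to show the ``opposite'' value equals the ``adjacent'' value, i.e.\ $\A(n)=\C(n)$. This is the step I expect to be the main obstacle, since it is not forced by symmetry alone. The idea is to analyze, inside each pattern $\Aa_k$, how the path between a given pair of exits of $\W_n$ traverses the level-$(n-1)$ tiles — that is, to set up a recursion expressing $\A(n)$ and $\C(n)$ in terms of the $\A_n,\B_n,\C_n,\dots$ and the level-$(n-1)$ path counts, using the fact that in a tree the exit-to-exit path through $\W_n$ decomposes as a concatenation of sub-paths, one through each level-$(n-1)$ white square it meets, each of which is a path between two exits of that tile. Here the key structural feature of snake cross patterns is that the path between any two exits of $\Aa_k$ runs through the \emph{same} multiset of tiles regardless of which exit pair we pick: by the construction algorithm the four arms are congruent, each arm from $C_k$ to its exit is a fixed snake-shaped sequence of tiles, and any exit-to-exit path is exactly (one arm) $\cup$ $C_k$ $\cup$ (another arm) — an opposite pair uses two ``collinear'' arms, an adjacent pair uses two ``perpendicular'' arms, but the arms are identical up to rotation, so the two resulting paths are rotation-equivalent as sequences of tiles, and in particular pass through the corresponding exit pairs of the same tiles. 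Combined with the already-established per-tile equalities $\A(n-1)=\B(n-1)$, $\C(n-1)=\D(n-1)=\E(n-1)=\F(n-1)$, this forces $\A(n)=\C(n)$. I would close the argument by induction on $n$: the base case $n=1$ is $\A_1=\B_1$ and $\C_1=\D_1=\E_1=\F_1$ (symmetry of $\Aa_1$) together with $\A_1=\C_1$ (the arms-are-congruent observation applied to $\Aa_1$), and the inductive step is exactly the tile-decomposition argument above; this yields the common value, which we name $D(L_n)$.
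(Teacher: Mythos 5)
Your proposal is correct and follows essentially the same route as the paper, whose proof is only a two-sentence sketch: establish $\A_k=\B_k=\cdots=\F_k$ for each pattern $\Aa_k$ (which you justify via the rotational symmetry plus the arm--central-square--arm decomposition of exit-to-exit paths) and then induct on $n$. Your write-up simply supplies the details the paper leaves implicit, in particular the tile-decomposition recursion $D(L_n)=D(L_{n-1})\cdot \A_n$ behind the induction step.
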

\begin{proof}(\emph{Sketch})
One can immediately see that $\A_k= \B_k= \C_k= \D_k= \E_k= \F_k$, for all $k\ge 1$, and thus in particular $\A(1)=\B(1)=\C(1)=\D(1)=\E(1)=\F(1)$. By induction, one can then immediately obtain the above result.
\end{proof}

We remark that one could also prove the above equalities by using the results on path matrices of mixed labyrinth fractals \cite[Proposition 1]{mixlaby}, but here in our special case it is not necessary to use the path matrices. \\

\begin{proposition}\label{prop:dimB_arcs_exits=2}
Let $\linf$ be the mixed labyrinth fractal generated by the sequence of snake cross patterns $\{\Aa_k \}_{k\ge 1}$ defined above. If $a$ is an arc that connects two distinct exits of $\linf$, then its box counting dimension is $\dim_B(a)=2$. 
\end{proposition}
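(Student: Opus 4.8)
## Proof proposal

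The plan is to compute the box-counting dimension via the limit formula already available from the Proposition quoted at the end of Section~\ref{sec:preliminaries}: since $a$ connects two exits, $\dim_B(a)$ exists and equals $\lim_{n\to\infty}\log D(L_n)/\sum_{k=1}^n\log m_k$ (using Lemma~\ref{lemma:arms_of_same_length} to see that all six inter-exit path lengths agree, so the quantity $D(L_n)$ is well defined and the formula applies to any of them). Thus the whole problem reduces to a sharp asymptotic estimate of $D(L_n)$, the number of white squares on the path connecting a fixed pair of exits of $\W_n$, against the denominator $\sum_{k=1}^n\log m_k=\log m(n)$.

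First I would count, for a single snake cross pattern $\Aa_k$ of width $m_k=4k+7$, the length $\A_k=D(\Aa_k)$ of the exit-to-exit path in $\G(\Aa_k)$. The arm construction (steps~1--8) makes each of the four arms a snake that sweeps back and forth across essentially the full width of the square with $k$ windings; one should check that each arm traverses on the order of a positive fraction of all $m_k^2$ cells, so that $\A_k = c\, m_k^2(1+o(1))$ for some constant $c\in(0,1]$ — in fact the picture strongly suggests the path fills a fixed positive-density portion of the grid, so $\A_k \asymp m_k^2$. The precise constant is irrelevant; what matters is $\log \A_k = 2\log m_k + O(1)$. Then, because the labyrinth set $\W_n$ is obtained by substituting $\Aa_n$ into each white square of $\W_{n-1}$ and the exit path of $\W_n$ is the concatenation of copies of exit paths of $\Aa_n$ along the exit path of $\W_{n-1}$ (this is exactly the mechanism behind Lemma~\ref{lemma:arc_construction} and the path-matrix recursion of \cite{mixlaby}), one gets the multiplicative recursion $D(L_n) = D(L_{n-1})\cdot \A_n$, hence $D(L_n)=\prod_{k=1}^n \A_k$. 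Here one must be slightly careful: the concatenation double-counts shared squares at the junctions, so really $D(L_n) = \prod_{k=1}^n \A_k - (\text{lower-order correction})$; since $\A_k\ge 3$ this correction only changes $D(L_n)$ by a bounded factor per level, which is harmless after taking logarithms and dividing.

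With $D(L_n)=\prod_{k=1}^n \A_k$ (up to a harmless factor), the dimension formula gives
\[
\dim_B(a)=\lim_{n\to\infty}\frac{\sum_{k=1}^n \log \A_k}{\sum_{k=1}^n \log m_k}
=\lim_{n\to\infty}\frac{\sum_{k=1}^n \bigl(2\log m_k + O(1)\bigr)}{\sum_{k=1}^n \log m_k}.
\]
Since $m_k=4k+7\to\infty$, we have $\log m_k\to\infty$, so $\sum_{k=1}^n\log m_k\to\infty$ and the bounded per-term error $O(1)$ is negligible in the Cesàro-type ratio: $\sum_{k=1}^n O(1) = O(n) = o\bigl(\sum_{k=1}^n \log m_k\bigr)$ because $\log m_k$, while slowly growing, still makes the partial sums grow faster than linearly? — here I should be careful, since $\sum \log(4k+7) \sim n\log n$, which does dominate $n$, so indeed the error term is $o(1)$ relative to the denominator. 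Therefore the limit equals $2$, giving $\dim_B(a)=2$. (Alternatively, and more robustly, one applies the Stolz--Cesàro theorem directly to $\log D(L_n)/\log m(n)$ with increments $\log\A_n/\log m_n = 2 + O(1/\log m_n)\to 2$.) Finally, since $a\subseteq[0,1]^2$ we trivially have $\dim_B(a)\le 2$, so the value $2$ is attained, not merely approached.

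The main obstacle is the combinatorial bookkeeping in the second paragraph: rigorously establishing that $\log\A_k = 2\log m_k + O(1)$, i.e. that the exit-to-exit path in $\Aa_k$ occupies a number of cells that is bounded above and below by constant multiples of $m_k^2$. The upper bound $\A_k\le m_k^2$ is immediate; the lower bound of the form $\A_k \ge c\, m_k^2$ requires reading off from the snake construction that each winding advances the path across a band of width proportional to $m_k$ and that there are $\Theta(m_k)$ such bands per arm — this is visually clear from Figures~\ref{fig:A1A2A3}--\ref{fig:colorA2} but needs to be turned into an explicit count of the cells visited in steps~4--7 summed over the $k$ repetitions and the four arms. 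Everything after that — the multiplicative recursion for $D(L_n)$ and the limit computation — is routine once the single-pattern estimate and the quoted dimension formula are in hand.
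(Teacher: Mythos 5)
Your proposal is correct and takes essentially the same route as the paper's proof: the same limit formula $\dim_B(a)=\lim_{n\to\infty}\log D(L_n)/\log m(n)$, the same product structure $D(L_n)=\prod_{k=1}^n D(\Aa_k)$ for the exit-to-exit path lengths (the concatenation runs over disjoint level-$(n-1)$ squares, so in fact no junction correction arises), and the same asymptotic conclusion. The one step you defer, namely the lower bound $D(\Aa_k)\ge c\,m_k^2$, is precisely the count the paper carries out explicitly: each arm contains $2k^2+2k+3$ white squares, so the path between two exits has length $4k^2+4k+7$ against width $m_k=4k+7$, after which the paper concludes exactly as you do, from $(4k+1)^{2-\varepsilon}<4k^2+4k+7<(4k+1)^2$.
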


\begin{proof}
In order to compute the box counting dimension of the arc between two distinct exits of $\linf$, we use an alternative definition of the box counting dimenstion \cite[Definition 1.3]{falconer_book}, and we apply the property that one can use, instead of $\delta \to 0$, apropriate sequences $(\delta_k)_{k\ge 0}$ (see the cited reference). In our case, for $\delta_k=\frac{1}{m(k)}$, we have $\delta_k < \frac{1}{5}\delta_{k-1}$, and thus the box counting dimenstion of any  arc between two exits in the fractal is given by 
\begin{equation*}
\dim_B(\linf)=\lim_{n\to \infty} \frac{\log D(L_n)}{\log m(n)}.
\end{equation*}
Therefore, let us now estimate $D(L_n)$. In order to do this, we count the number of squares that are along one of the four arms of the pattern $\Aa_k$ (we do not include the central square of the pattern), taking into account the following facts, according to the procedure described in Section \ref{sec:snake_cross_patterns}: 
\begin{itemize}
\item[($i$)] in the first step of the construction of the arm one square is added;
\item[($ii$)] in step $3$ two squares are added;
\item[($iii$)] in the non-final steps $4$ and $6$ the number of added squares is $$1+4+8+\dots+4(k-1)=1+4\frac{(k-1)k}{2}=2k^2-2k+1;  $$
\item[($iv$)] in the steps $3$, $5$ and $7$ there are $2k$ added squares;
\item[($v$)] in the final step $4$ or $6$ there are $2k-1$ added squares.
\end{itemize}

The above counting yields a total number of $$1+2+(2k^2-2k+1)+2k+(2k-1)=2k^2+2k+3$$ squares. Hence, the length of the path in $\G(\Aa_k)$ between two exits of the snake cross pattern $\Aa_k$ is $2(2k^2+2k+3)+1=4k^2+4k+7$. Thus we obtain
$$ D(L_n)=\prod_{k=1}^n\left( 4k^2+4k+7 \right)$$
for the number of squares in the path  in $\G(\W_n)$ that connects two distinct exits of the labyrinth set $\W_n$.
Due to the arc construction given in Lemma \ref{lemma:arc_construction} and the fact that the arc between two exits can be obtained as the limit of the nested sequence of sets defined, for each $n\ge1$, as the union of the squares that lie in the path in $\G(\W_n)$ between the corresponding exits of the labyrinth set $\W_n$, and therefore is covered by the squares in this path, we obtain
\begin{equation}\label{eq:dimB_arcs_exits}\displaystyle
\dim_B(a)=\lim_{n\to \infty} \frac{\log\left(\prod_{k=1}^n (4k^2+4k+7) \right)}{\log \left(\prod_{k=1}^{n} (4k+1)\right)}=\lim_{n\to \infty}\frac{\sum_{i=1}^n\log(4k^2+4k+7)}{\sum_{k=1}^n \log(4k+1)}
\end{equation} 
Note that for every $\varepsilon>0$ and for sufficiently large $k$ the inequality 
$4k^2+4k+7>(4k+1)^{2-\varepsilon}$ holds, hence $\dim_B(a)\ge 2.$ On the other hand, $4k^2+4k+7<(4k+1)^2$, which then implies $\dim_B(a)=2.$
\end{proof}

Proposition \ref{prop:dimB_arcs_exits=2}, the structure of  mixed labyrinth fractals defined by snake cross patterns, and the properties of the box counting dimension immediately yield the following corollary.

\begin{corollary}\label{cor:dimB(fractal)=2}
Let $\linf$ be a mixed labyrinth fractal generated by a sequence of snake cross patterns as above. Then $\dim_B(\linf)=2$.
\end{corollary}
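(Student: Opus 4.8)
The plan is to obtain $\dim_B(\linf)=2$ by a simple sandwich, using Proposition \ref{prop:dimB_arcs_exits=2} for the lower bound and the ambient square for the upper bound. Concretely, I would first fix any arc $a$ in $\linf$ joining two distinct exits — say the top and the bottom exit of $\linf$, which exist by the exits property recalled in Section \ref{sec:preliminaries}. Since $a\subseteq\linf$ and, by Proposition \ref{prop:dimB_arcs_exits=2}, the box counting dimension $\dim_B(a)$ exists and equals $2$, the monotonicity of the (lower) box counting dimension under inclusion (see \cite{falconer_book}) yields $\underline{\dim}_B(\linf)\ge\underline{\dim}_B(a)=2$.

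For the upper bound I would simply note $\linf\subseteq[0,1]\times[0,1]$, so $\overline{\dim}_B(\linf)\le\overline{\dim}_B([0,1]\times[0,1])=2$, again by monotonicity. If one prefers a self-contained covering estimate, the same bound follows directly from the fact that for each $n$ the $|\W_n|$ squares of side $1/m(n)$ making up $\W_n$ cover $\linf$, together with the trivial $|\W_n|\le m(n)^2$: this gives $\overline{\dim}_B(\linf)\le\limsup_{n\to\infty}\frac{\log|\W_n|}{\log m(n)}\le 2$ from the grid-counting formulation of box dimension in \cite{falconer_book}.

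Combining the two bounds gives $2\le\underline{\dim}_B(\linf)\le\overline{\dim}_B(\linf)\le 2$, hence the box counting dimension of $\linf$ exists and equals $2$. I do not expect a genuine obstacle here: the only points to state carefully are that Proposition \ref{prop:dimB_arcs_exits=2} already delivers the existence (coincidence of lower and upper) of $\dim_B(a)$, so no separate existence argument for $\dim_B(\linf)$ is needed beyond the sandwich, and that one invokes the correct monotonicity property of the upper and lower box counting dimensions. This is exactly the ``structure of mixed labyrinth fractals defined by snake cross patterns, and the properties of the box counting dimension'' alluded to in the statement of the corollary.
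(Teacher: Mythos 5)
Your argument is exactly the one the paper intends: the corollary is stated as an immediate consequence of Proposition \ref{prop:dimB_arcs_exits=2} together with monotonicity of the box counting dimension, i.e.\ the sandwich $2=\dim_B(a)\le \underline{\dim}_B(\linf)\le\overline{\dim}_B(\linf)\le\dim_B([0,1]^2)=2$. Your write-up is correct and simply makes explicit what the paper leaves to the reader.
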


{\bf Notation.}
Let $x \in \linf$ and $n\ge 1$. Along this paper the notation $W_n(x)$ has the following meaning (analogoulsy as introduced  in the proof of \cite[Lemma 6]{laby_4x4} for the case of self similar labyrinth fractals ): Let $\W(x)$ be the set of all white squares in $\bigcup_{n\ge 1}\W_n$ that contains the point $x$. Let $W_1(x)$ be a white square in $\W_1$ that contains infinitely many (white) squares of $\W(x)$ as a subset. For $n\ge 2$ we define $W_n(x)$ as a white square in $\W_n$, such that $W_n(x)\subseteq W_{n-1}(x)$, and $W_n(x)$ contains infinitely many white squares of $W(x)$ as a subset.

\begin{theorem}\label{theo:all_arcs_dimB=2}
Let $\linf$ be a mixed labyrinth fractal generated by a sequence of snake cross patterns as above. Then, for any  distinct points $x,y\in \linf$ the arc $a(x,y)$ that connects them in $\linf$ has box counting dimension  $\dim_B(a(x,y))=2$.
\end{theorem}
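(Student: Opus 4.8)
The plan is to reduce the general case to the case of arcs between exits, which was settled in Proposition~\ref{prop:dimB_arcs_exits=2} and Corollary~\ref{cor:dimB(fractal)=2}. Fix distinct $x,y\in\linf$ and, for each $n\ge1$, consider the squares $W_n(x), W_n(y)\in\W_n$ from the Notation above (or, equivalently, the squares produced by Lemma~\ref{lemma:arc_construction}). Since $x\neq y$, there is an index $n_0$ with $W_{n_0}(x)\neq W_{n_0}(y)$, and for all $n\ge n_0$ the two squares are distinct and the arc $a(x,y)$ is covered by the union of the squares lying on the path $p_n(W_n(x),W_n(y))$ in $\G(\W_n)$; call the number of such squares $N(n)$. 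As in the proof of Proposition~\ref{prop:dimB_arcs_exits=2}, because $\delta_n=\frac{1}{m(n)}<\frac15\delta_{n-1}$, the box counting dimension of $a(x,y)$ equals $\lim_{n\to\infty}\frac{\log N(n)}{\log m(n)}$, provided this limit exists (and otherwise one works with $\liminf$ and $\limsup$ separately). So everything comes down to estimating $N(n)$ from above and below in terms of $D(L_n)=\prod_{k=1}^n(4k^2+4k+7)$.

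The upper bound is the easy direction: the path in a tree between any two vertices is contained in a path obtained by going from $W_n(x)$ up to a distinguished vertex (say the top exit of $\W_n$) and then down to $W_n(y)$, so $N(n)\le 2D(L_n)$, and the factor $2$ is irrelevant in the limit; hence $\dim_B(a(x,y))\le 2$ (which we already know from $\dim_B(\linf)=2$, but it is worth recording). For the lower bound one wants $N(n)\ge c\cdot D(L_n)$ for some constant $c>0$ independent of $n$, or at least $N(n)\ge D(L_{n})/g(n)$ with $\log g(n)=o(\log m(n))$. The idea is structural: for $n\ge n_0$ the square $W_{n-1}(x)$ splits, at level $n$, according to the pattern $\Aa_n$ into a copy of the snake cross pattern, and $W_n(x)$ is one of its white subsquares; inside this copy the path from $W_n(x)$ to the exit of $W_{n-1}(x)$ through which the arc leaves must traverse at least one full ``arm'' of $\Aa_n$ minus a bounded portion, hence has length at least $2k^2+2k+3-O(n)$ on that level. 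Multiplying these per-level contributions over $k=n_0,\dots,n$ and using that the bounded corrections only contribute an $O(n\log n)$ additive error to $\log N(n)$ — which is $o(\log m(n))=o(\sum\log(4k+1))$ — one gets $\frac{\log N(n)}{\log m(n)}\to 2$.

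More carefully, I would argue as follows. Let $e_n\in\W_n$ be the exit square of $W_{n-1}(x)$ on the side through which $a(x,y)$ leaves $W_{n-1}(x)$ (for $n$ large this side is well-defined since the two points have separated). The portion of the path $p_n(W_n(x),W_n(y))$ lying inside $W_{n-1}(x)$ contains the sub-path from $W_n(x)$ to $e_n$. Now $W_n(x)$ lies somewhere inside the level-$n$ copy of $\Aa_n$ filling $W_{n-1}(x)$; by the explicit description of the arms, any white square of $\Aa_n$ is joined to any chosen exit by a path whose length is at least that of a full arm up to a bounded additive term, so this sub-path has $\ge \tfrac12(4k^2+4k+7)-Ck$ squares at level $k=n$, and the same bound holds at every level $k$ from $n_0$ to $n$ by iterating inside the successive copies. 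Since these sub-paths at different levels are nested (each refining the previous), $N(n)\ge\prod_{k=n_0}^{n}\bigl(\tfrac12(4k^2+4k+7)-Ck\bigr)$, whence
\[
\dim_B(a(x,y))=\lim_{n\to\infty}\frac{\log N(n)}{\log m(n)}
\ge \lim_{n\to\infty}\frac{\sum_{k=n_0}^n\log\!\bigl(\tfrac12(4k^2+4k+7)-Ck\bigr)}{\sum_{k=1}^n\log(4k+1)}=2,
\]
because each numerator term is $(2-o(1))\log(4k+1)$. Combined with the upper bound $\dim_B(a(x,y))\le\dim_B(\linf)=2$ from Corollary~\ref{cor:dimB(fractal)=2}, this gives $\dim_B(a(x,y))=2$.

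The main obstacle is making the lower bound on the sub-path length rigorous, i.e.\ showing that when $W_n(x)$ sits at an arbitrary interior position of the snake cross copy, the tree-path from it to the prescribed exit still captures a positive constant fraction of a full arm. This needs a short combinatorial lemma about $\G(\Aa_k)$: the unique path from any vertex to any one of the four exits has length at least $\tfrac12\A_k-O(k)$. This should follow directly from the arm-by-arm description in Section~\ref{sec:snake_cross_patterns} — a vertex on arm $A$ is joined to the exit on arm $B\neq A$ by going to the center and then out along $B$, picking up a whole (remaining) arm plus the center, and a vertex on arm $A$ joined to the exit on arm $A$ still lies on a path of total length $\A_k$ so loses at most the initial segment — but the bookkeeping of the ``bounded corrections'' and the fact that they only perturb $\log N(n)$ by $O(n\log n)=o(\log m(n))$ is where the care is needed.
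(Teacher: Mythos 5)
Your overall strategy --- reduce to the exit-to-exit count of Proposition \ref{prop:dimB_arcs_exits=2} and control the path length $N(n)$ level by level along the branch of $x$ --- has a genuine gap: the combinatorial lemma your lower bound rests on is false as stated. You claim that in $\G(\Aa_k)$ the path from \emph{any} white square to \emph{any} chosen exit has length at least $\tfrac12\A_k-O(k)$. This fails in exactly the case you wave at (``a vertex on arm $A$ joined to the exit on arm $A$ \dots loses at most the initial segment''): the lost initial segment can be almost the entire arm, and a white square adjacent to the chosen exit is joined to it by a path of length $1$, not $\tfrac12\A_k-O(k)$. Worse, this degenerate case can occur at \emph{every} level simultaneously: if $x$ is the exit point of $W_{n_0}(x)$ on the side through which $a(x,y)$ leaves (a perfectly admissible point of $\linf$), then $W_n(x)=e_n$ for all $n>n_0$, and the per-level contribution of the $x$-branch is $O(1)$ rather than $\tfrac12(4k^2+4k+7)-Ck$. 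So $N(n)\ge\prod_{k=n_0}^{n}\bigl(\tfrac12(4k^2+4k+7)-Ck\bigr)$ does not follow from the argument you give.

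The repair is to stop tracking the segment of the path near $x$ (or $y$) and instead exploit a square that the arc crosses \emph{completely}. Choose $n_0$ so that the path in $\G(\W_{n_0})$ from $W_{n_0}(x)$ to $W_{n_0}(y)$ has at least three vertices (not merely two distinct endpoints, as in your setup), and let $W$ be an intermediate square on it, so that neither $x$ nor $y$ lies in the interior of $W$. Then $a'=a(x,y)\cap W$ is an arc joining two exits of $\linf\cap W$, which is a similar copy, scaled by $1/(m_1m_2\cdots m_{n_0})$, of the mixed snake labyrinth fractal generated by the shifted sequence $\{\Aa_{n_0+k}\}_{k\ge1}$; the limit computed in Proposition \ref{prop:dimB_arcs_exits=2} is unchanged by deleting the first $n_0$ factors from numerator and denominator, so $\dim_B(a')=2$, and monotonicity gives $\dim_B(a(x,y))\ge\dim_B(a')=2$. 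This is the paper's argument; it needs none of the ``bounded corrections'' bookkeeping, and your upper bound via Corollary \ref{cor:dimB(fractal)=2} then finishes the proof.
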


\begin{proof}
Let $x\ne y$ be two distinct points in the fractal $\linf$.
From $a(x,y)\subset \linf$ it imediately follows that $\dim_B(a(x,y))\le \dim_B \linf=2.$ 
In order to prove the converse inequality, we proceed as follows.

For any $n\ge 1$ let $W_n(x)$ and $W_n(y)$ be white squares of level $n$, corresponding to the notation introduced above.

According to the construction of labyrinth sets and fractals it immediately follows that there exists a value $n_0\ge 1$ such that the path in the tree $\G(\W_{n_0})$ that connects $W_{n_0}(x)$ and $W_{n_0}(y)$ consists of at least three white squares (vertices of $\G(\W_{n_0})$), i.e., there exists a square $W\in \W_{n_0}$ in this path, such that $x,y$ do not lie in the interior of $W$. 

Now, let us note that, due to the construction and structure of mixed labyrinth fractals, the intersection of the arc $a(x,y)$ that connects $x$ and $y$ in $\linf$  with the swuare $W$ is an arc $a'=a(x,y)\cap W$ that connects  two exits of the fractal $\linf'=\linf \cap W$, which is just a scaled labyrinth fractal, obtained from the sequence of snake cross patterns $\{\Aa'_k\}_{k\ge 1}$, with $\Aa'_k=\Aa_{k+n_0}$, where $n$ was chosen and fixed above. Thus, the arc $a'$ is just the image of an arc between two exits of $\linf$ through a similarity mapping of factor $1/m_1 m_2 \dots m_{n_0}$.
Now we sum up as in formula \eqref{eq:dimB_arcs_exits}, but for the integer $k$ ranging from $n_0+1$ (instead of $1$) to $n$. Since the limit of a sequence does not change by removing the first $n_0$ elements for some fixed $n_0$ we obtain,  as in Proposition \ref{prop:dimB_arcs_exits=2}, that 
any arc that connects distinct exits of $\linf'$ has box counting dimension two, and thus, in particular,  $\dim_B(a')= 2$, and since $\dim_B(a(x,y))\ge \dim_B(a')=2$, we get $\dim_B(a(x,y))= 2$. 
\end{proof}

\begin{remark}
The above results show that the snake cross patterns enable us to construct 
(mixed) labyrinth fractals based on horizontally and vertically blocked labyrinth patterns where the box counting dimension of every arc in the fractal as well as that of the fractal itself equals two. In the case of self-similar labyrinth fractals \cite{laby_4x4, laby_mxm} generated by both horizontally and vertically blocked patterns, all arcs in the fractal have the same box counting dimension (strictly less than $2$) and this is in general different from de dimension of the fractal. 
\end{remark}

\section{``Omnidimensionality'' of mixed labyrinth fractals}\label{sec:omnidimensional}
In this section we show that one can use the snake cross patterns in order to construct mixed labyrinth fractals of any dimension.

\begin{theorem}\label{theo:omnidimensional}
Let $\delta \in [1,2]$. Then there exists a (mixed) labyrinth fractal $\linf$ with the property that every two exits in the fractal are connected in $\linf$ by an arc whose box counting dimension equals $\delta$.
\end{theorem}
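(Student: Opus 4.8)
The plan is to interpolate between the two extreme behaviours we have at our disposal: a snake cross pattern $\Aa_k$ of width $m_k = 4k+7$ contributes a factor of roughly $\frac{\log(4k^2+4k+7)}{\log(4k+1)} \to 2$ to the dimension quotient, while a ``straight'' labyrinth pattern — for instance a cross pattern in which the arcs between exits are as short as possible, so that the path between two exits has length linear in the width $m$ — contributes a factor tending to $1$. More precisely, recall from Proposition \ref{prop:dimB_arcs_exits=2} and the alternative definition of box counting dimension via the sequence $\delta_n = 1/m(n)$ that the dimension of an arc between two exits equals $\lim_{n\to\infty} \frac{\log D(L_n)}{\log m(n)} = \lim_{n\to\infty} \frac{\sum_{k=1}^n \log \ell_k}{\sum_{k=1}^n \log m_k}$, where $\ell_k$ is the length of the exit-to-exit path in $\G(\Aa_k)$. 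So I need to choose, for each $k$, a labyrinth pattern with width $m_k$ and exit-path-length $\ell_k$ so that this ratio of partial sums converges to the prescribed $\delta$.

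First I would handle the endpoints: $\delta = 2$ is Proposition \ref{prop:dimB_arcs_exits=2} (all snake cross patterns), and $\delta = 1$ is obtained by taking all patterns to be minimal-length cross patterns (e.g.\ the $3\times3$ "plus" pattern, or scaled special cross patterns whose exit path has length $O(m_k)$), for which $\frac{\log \ell_k}{\log m_k} \to 1$; standard Cesàro-type reasoning then gives the limit $1$. For $\delta \in (1,2)$, I would build the width sequence and pattern sequence in long alternating blocks: use snake cross patterns on a block of indices, then minimal cross patterns on the next block, etc., controlling the block lengths so that the running ratio $\frac{\sum \log \ell_k}{\sum \log m_k}$ is steered toward $\delta$ and kept within shrinking error bars. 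Concretely, at each stage, if the current partial ratio is below $\delta$ I append snake cross patterns (which push the ratio up, since each new term $\log\ell_k/\log m_k$ is near $2$) until the ratio exceeds $\delta$; if it is above $\delta$ I append minimal cross patterns (each new term near $1$) until it drops below $\delta$. Because a single additional pattern changes the partial sums by a bounded-from-below-and-above amount relative to their current size only when that size is comparable to $\log m_k$ — and in fact each individual term $\log\ell_k$ is $o(\sum_{j\le k}\log m_j)$ once $k$ is large — the oscillations of the running ratio around $\delta$ become arbitrarily small, so the limit is exactly $\delta$. This is the standard "greedy alternation" trick for realizing a prescribed $\liminf = \limsup$.

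The main obstacle, and the point that needs the most care, is showing the running ratio actually \emph{converges} to $\delta$ rather than merely oscillating between a $\liminf < \delta$ and a $\limsup > \delta$. The key quantitative input is that as $k \to \infty$ the increments $\log \ell_k$ and $\log m_k$ are both of order $\log k$, hence each is vanishingly small compared to the accumulated denominator $\sum_{j=1}^{k}\log m_j \sim k\log k$; therefore appending one more pattern perturbs the ratio by $O\!\big(\tfrac{\log k}{k\log k}\big) = O(1/k)$, which tends to $0$. This lets me arrange that after the $k$-th pattern the ratio lies within, say, $2/k$ of $\delta$, forcing convergence. I would also need to check two routine consistency conditions: that the chosen sequence of widths still satisfies $m_k \ge 3$ (trivial) and that the patterns used remain genuine labyrinth patterns with the required exit structure so that Lemma \ref{lemma:arc_construction} applies and the exit-to-exit arc is indeed covered by exactly the $D(L_n)$ squares of the connecting path — both hold by construction since snake cross patterns and minimal cross patterns are labyrinth patterns. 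Finally, to pass from "the ratio of partial sums converges to $\delta$" to "$\dim_B(a) = \delta$" I invoke, exactly as in Proposition \ref{prop:dimB_arcs_exits=2}, that the connecting path in $\G(\W_n)$ consists of $D(L_n)$ squares of side $1/m(n)$ whose union covers the arc and, conversely, meets it in a way that forces the box count to be of order $D(L_n)$ at scale $1/m(n)$; since $\delta_n/\delta_{n-1} = 1/m_n \le 1/3 < 1$ is bounded away from $1$, this particular sequence of scales suffices to compute the box dimension, and the value is $\lim_n \frac{\log D(L_n)}{\log m(n)} = \delta$.
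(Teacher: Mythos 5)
Your proposal is correct in substance, but it takes a genuinely different route from the paper. The paper does not use a greedy alternation: having located the index $k$ with $d_k<\delta<d_{k+1}$ (where $d_k=\log(4k^2+4k+7)/\log(4k+1)$ is the dimension quotient of the $k$-th snake cross pattern), it proves a small auxiliary lemma producing integer sequences $(p_j),(q_j)$ with $r_j=\frac{p_j a+q_j c}{p_j b+q_j d}\nearrow\delta$, and then builds \emph{composite} patterns $\Aa_j^*$ of width $m_k^{p_j}m_{k+1}^{q_j}$ by stacking $p_j$ copies of $\Aa_k$ followed by $q_j$ copies of $\Aa_{k+1}$; the dimension then comes out as a weighted Ces\`aro average of the $r_j$, which converges to $\delta$ monotonically. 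Your construction instead interpolates between the two extreme families (snake cross patterns with quotient near $2$, plus-shaped patterns with quotient near $1$) by a feedback rule on the running ratio. This is more elementary (no analogue of the paper's Lemma~\ref{lemma:exist_sequences} is needed) and the convergence argument you give — each step perturbs the ratio by $\frac{\log m_{n+1}}{\sum_{j\le n}\log m_j}\bigl(\tfrac{\log\ell_{n+1}}{\log m_{n+1}}-R_n\bigr)=o(1)$, the ratio moves monotonically toward $\delta$ within each phase, and overshoots at the crossings shrink to zero — is sound; it is the standard way to realize a prescribed limit of a ratio of partial sums. Two small points you should make explicit: (i) for the ``down'' phase you need plus patterns whose own quotient $\log(2m-1)/\log m$ actually lies \emph{below} $\delta$, which for $\delta$ close to $1$ forces large widths (the $3\times3$ plus has quotient $\log 5/\log 3\approx 1.46$), and symmetrically the ``up'' phase needs $d_k>\delta$, so large snake indices when $\delta$ is close to $2$; (ii) the multiplicativity $D(L_n)=\prod\ell_k$ rests on all six exit-to-exit path lengths being equal in every pattern of your mixed sequence (the analogue of Lemma~\ref{lemma:arms_of_same_length}), which holds for both families by their four-fold symmetry but should be stated. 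Neither point is a gap, only a detail to record; what each approach buys is that the paper's yields an explicit, monotone scheme tied to two fixed patterns, while yours is more flexible and extends verbatim to any two families of labyrinth patterns whose dimension quotients accumulate above and below $\delta$.
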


Before giving a proof of the theorem, we prove the following useful result.
\begin{lemma}\label{lemma:exist_sequences}
Let $a,b,c,d$ be positive real numbers and let $\frac{a}{b}< \alpha< \frac{c}{d}$. Then there are sequences $(p_k)_{k\ge 1}$ and $(q_k)_{k\ge 1}$ of integers such that the sequence $(r_k)_{k\ge 1}$, with $r_k=\frac{p_k\cdot a+q_k\cdot c}{p_k \cdot b+q_k \cdot d}$, is increasing and $\lim_{k\to \infty} r_k=\alpha $.
\end{lemma}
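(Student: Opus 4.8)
\textbf{Proof proposal for Lemma \ref{lemma:exist_sequences}.}

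The plan is to build the sequences $(p_k)$ and $(q_k)$ greedily, one index at a time, so that each new ratio $r_k$ lies strictly between $r_{k-1}$ and $\alpha$, and so that $r_k \to \alpha$. The key observation is that the map $(p,q)\mapsto \frac{pa+qc}{pb+qd}$ is a \emph{mediant-type} operation: for fixed positive $p,q$, the value $\frac{pa+qc}{pb+qd}$ is a weighted mediant of $\frac ab$ and $\frac cd$, hence always lies strictly in the open interval $\left(\frac ab,\frac cd\right)$, and as the ratio $q/p$ increases from $0$ to $\infty$ the mediant increases continuously and monotonically from $\frac ab$ to $\frac cd$. In particular, by choosing the integer ratio $q/p$ appropriately one can approximate any target in $\left(\frac ab,\frac cd\right)$, and in particular $\alpha$, arbitrarily well.

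First I would make this quantitative. Writing $t = q/p \in (0,\infty)$ (a positive rational whenever $p,q$ are positive integers) and $f(t) = \frac{a + tc}{b + td}$, one checks $f$ is strictly increasing, continuous, $f(0^+)=\frac ab$, $f(\infty)=\frac cd$, so $f$ is a bijection from $(0,\infty)$ onto $\left(\frac ab,\frac cd\right)$; moreover $f$ maps positive rationals to a dense subset of this interval (indeed, since $f$ is a continuous increasing bijection and the rationals are dense in $(0,\infty)$, $f(\mathbb Q_{>0})$ is dense in $\left(\frac ab,\frac cd\right)$). Now fix an auxiliary strictly increasing sequence $\alpha_k \uparrow \alpha$ with $\alpha_1 \in \left(\frac ab,\alpha\right)$ — for instance $\alpha_k = \alpha - \frac{1}{k}(\alpha - \frac ab)\cdot\frac12$ or any such choice staying in $\left(\frac ab,\alpha\right)$. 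Using density, for each $k$ pick a positive rational $t_k$ with $f(t_k)\in(\alpha_k,\alpha)$; since $f$ is increasing one can, after possibly passing to a subsequence of the $t_k$, also arrange $t_1 < t_2 < \cdots$, which forces $r_k := f(t_k)$ to be strictly increasing; and since $\alpha_k < r_k < \alpha$ with $\alpha_k\to\alpha$, we get $r_k\to\alpha$. Finally write each $t_k = q_k/p_k$ with $p_k,q_k$ positive integers; then $r_k = \frac{p_k a + q_k c}{p_k b + q_k d}$ as required.

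There is a small technical point I would need to be careful about: guaranteeing simultaneously that $(r_k)$ is \emph{strictly} increasing and that $r_k\to\alpha$, while the $t_k$ come from a mere density argument (which a priori gives no monotonicity). The clean fix is to choose the $t_k$ inductively rather than all at once: having chosen $t_{k-1}$ with $r_{k-1}=f(t_{k-1})<\alpha$, pick $\beta_k$ with $\max(r_{k-1},\alpha_k) < \beta_k < \alpha$, then use density and monotonicity of $f$ to find a rational $t_k > t_{k-1}$ with $f(t_k)\in(\beta_k,\alpha)$ — possible because $f$ is a continuous increasing bijection onto $\left(\frac ab,\frac cd\right)\ni\beta_k$, so $f^{-1}(\beta_k)$ is a genuine point and rationals exceeding $\max(t_{k-1},f^{-1}(\beta_k))$ and small enough to keep $f(t_k)<\alpha$ form a nonempty open set. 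This makes $r_k$ strictly increasing and squeezed between $\alpha_k$ and $\alpha$, hence convergent to $\alpha$. I expect this inductive-choice bookkeeping to be the only real obstacle; everything else is the elementary calculus of the mediant function $f$, which I would not belabor.

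Note also that the two boundary cases $\delta\in\{1,2\}$ of Theorem \ref{theo:omnidimensional} — i.e.\ when $\alpha$ would be an endpoint — are not needed here: the lemma is stated for $\alpha$ strictly between $\frac ab$ and $\frac cd$, and in the application one takes $a/b$ and $c/d$ to straddle the desired $\delta\in(1,2)$ strictly, while $\delta=1$ and $\delta=2$ are handled separately (the latter by Corollary \ref{cor:dimB(fractal)=2}).
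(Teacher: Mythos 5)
Your proof is correct, and it takes a genuinely different (more qualitative) route than the paper. You parametrize by $t=q/p$ and observe that the mediant function $f(t)=\frac{a+tc}{b+td}$ is a continuous, strictly increasing bijection of $(0,\infty)$ onto $\left(\frac ab,\frac cd\right)$ (indeed $f'(t)=\frac{cb-ad}{(b+td)^2}>0$ since $\frac ab<\frac cd$), and then pick rationals $t_k\uparrow f^{-1}(\alpha)$ inductively; your care about getting strict monotonicity and convergence simultaneously is exactly the right technical point, and the inductive choice closes it. (In fact you could shortcut even this: any strictly increasing sequence of rationals $t_k\uparrow f^{-1}(\alpha)$ gives $r_k=f(t_k)$ strictly increasing with limit $\alpha$ by continuity.) The paper instead parametrizes by $p_k/q_k$ tending to $\frac{c-\alpha d}{\alpha b-a}=f^{-1}(\alpha)^{-1}$ with error $\varepsilon_k\downarrow 0$, and carries out an explicit algebraic computation yielding the closed form $r_k=\alpha-\frac{(\alpha b-a)^2}{(cb-da)+(\alpha b-a)b\varepsilon_k}\,\varepsilon_k$, from which monotonicity and convergence are read off. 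The trade-off is that the paper's formula is not just a proof device: it is reused verbatim in the proof of Theorem \ref{theo:omnidimensional} to control the Ces\`aro averages $\frac{\sum_k(p_kb+q_kd)(\alpha-r_k)}{\sum_k(p_kb+q_kd)}$ and show they tend to $0$. Your qualitative argument proves the lemma cleanly but supplies no such quantitative expression, so if one adopted your proof one would still need to extract an explicit bound on $\alpha-r_k$ (or redo the paper's computation) for the downstream application.
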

\begin{proof}
Note that from the lemma's assumptions we immediately have $\frac{c-\alpha d}{\alpha b -a}>0$. Hence, there is an increasing sequence $\left (\frac{p_k}{q_k}\right)_{k\ge 1}$,  with $p_k, q_k$ positive integers, such that $\lim_{k\to \infty}\frac{p_k}{q_k} =\frac{c-\alpha d}{\alpha b -a}$.  We denote $\varepsilon_k:=\frac{p_k}{q_k}-\frac{c-\alpha d}{\alpha b -a}$, for $k\ge 1$. Remark, that $(\varepsilon_k)_{k\ge 1}$ is a decreasing sequence that converges to zero. Now, let us proceed to the following computations:
\begin{align*}
r_k&=\frac{p_ka+q_kc}{p_k b +q_kd}=\frac{\frac{p_k}{q_k}a+c}{\frac{p_k}{q_k}b+d}= 
\frac{\left( \frac{c-\alpha d}{\alpha b-a} +\varepsilon_k\right)a+c}{\left( \frac{c-\alpha d}{\alpha b-a} +\varepsilon_k \right)b+d}=\frac{(c-\alpha d)a+(\alpha b -a)a\varepsilon_k+c(\alpha b-a)}{(c-\alpha d)b+(\alpha b -a)b\varepsilon_k+d(\alpha b-a)}\\
&=\frac{(cb-da)\alpha+(\alpha b-a)a\varepsilon_k}{(cb-da)+(\alpha b-a)b\varepsilon_k}=
\frac{(cb-da)\alpha+(\alpha b-a)b\alpha \varepsilon_k+[(\alpha b-a)a-(\alpha b-a)b \alpha]\varepsilon_k}{(cb-da)+(\alpha b-a)b \varepsilon_k} \\
&=\alpha - \frac{(\alpha b-a)^2}{(cb-da)+(\alpha b-a)b\varepsilon_k}\varepsilon_k.
\end{align*} 
Thus, $\lim_{k\to \infty}r_k= \alpha$. In order to prove that the sequence $(r_k)_{k\ge 1}$ is increasing, let us consider the function 
\[f(x):=\alpha-\frac{(\alpha b-a)^2}{(cb-da) + (\alpha b -a)bx}x.\]
Its first derivative is 
\[
f'(x)=-\frac{(\alpha b -a)^2(cb-da)}{\big((cb-da) + (\alpha b -a)bx\big)^2}<0.
\] Thus, the fact that the sequence $(\varepsilon_k)_{k\ge 1}$ is decreasing  yields  that the sequence $(r_k)_{k\ge 1}$ is increasing, which completes the proof. 
\end{proof}
\begin{proof}[Proof of Theorem \ref{theo:omnidimensional}]
The case $\delta=2$ has been solved in Section \ref{sec:mixed_arcs_max_dim}.

For the case $\delta=1$ it is enough to consider, e.g., the mixed labyrinth fractal generated by a sequence of labyrinth patterns $\{\Aa_k  \}_{k\ge 1}$ defined as follows: Figure \ref{fig:ex:cross} shows the pattern $\Aa_1$. In general, for $k\ge 1$, $\Aa_k$ has width $2k+1$ and has the shape of a cross, consisting only of the $(k+1)$-th row and $(k+1)$-th column.  Since the arcs connecting the exits of the resulting mixed labyrinth fractal are either straight line segments of length $1$ or the union of two straigh line segements of length $1/2$ (each), we are done. In this case it easily follows $\dim_B(\linf)=\dim_H (\linf)=1$ and 
box counting dimension of any arc in the dendrite is $1$.

\begin{figure}[h!]
\begin{center}
\begin{tikzpicture}[scale=.35]
\draw[line width=1.2pt] (0,0) rectangle (9,9);
\draw[line width=1pt] (3, 0) -- (3,9);
\draw[line width=1pt] (6, 0) -- (6,9);

\draw[line width=1pt] (0, 3) -- (9,3);
\draw[line width=1pt] (0, 6) -- (9,6);

\filldraw[fill=black, draw=black] (0,0) rectangle (3, 3);
\filldraw[fill=black, draw=black] (0,6) rectangle (3, 9);
\filldraw[fill=black, draw=black] (6,6) rectangle (9, 9);
\filldraw[fill=black, draw=black] (6,0) rectangle (9, 3);

\end{tikzpicture}
\caption{The first pattern in the sequence $\{ \Aa_k \}_{k\ge 1}$ that generates a mixed labyrinth fractal having all arcs of box counting dimension one.}
\label{fig:ex:cross}
\end{center}
\end{figure}
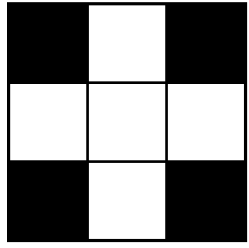

Now, we assume $\delta \in (1,2)$. Let $\Aa_0$ denote the (unblocked) labyrinth pattern in Figure \ref{fig:ex:cross}. For $k\ge 1$ let $\Aa_k$ be the $k$-th snake cross pattern, as defined in Section \ref{sec:snake_cross_patterns}. For every $k\ge 1$ we introduce the \emph{$k$-th  dimension quotient} $d_k$ as follows. By definition,

 \[d_0:=1 \text{ and } d_k:= \frac{\log(4k^2+4k+7)}{\log(4k+1)}, \text{ for all } k\ge 1.\]

Note that the quotients $d_k$, for $k\ge 1$ occur in the computations in formula \eqref{eq:dimB_arcs_exits}. 
Since $\lim_{k\to \infty}d_k=2$, it follows that there is an integer $k \ge 1$ such that 
	\[ d_k\le \delta \le d_{k+1} .\]
If $\delta=d_k$, for some $k\ge 1$ then the (self similar) labyrinth fractal constructed based on the snake cross pattern $\Aa_k$ is a solution, and if $\delta =d_{k+1}$ then the (self similar) labyrinth fractal constructed based on the snake cross pattern $\Aa_{k+1}$ is a solution. Let us now assume that $d_k<\delta <d_{k+1}.$ More precisely, we then have for this value of $k$,
\begin{equation}
\label{eq:ineq_dim_quotients}
\frac{\log(4k^2+4k+7)}{\log(4k+1)}<\delta< \frac{\log \left( (4(k+1)^2+4(k+1)+7) \right)}{\log \left(4(k+1)+1 \right)}.
\end{equation}
We use the following notation: 
\[
a=\log(4k^2+4k+7), \;b=\log(4k+1), \; c=\log\big(4(k+1)^2+4(k+1)+7 \big), 
\; d= \log\big(4(k+1)+1  \big).
\]
Moreover,  let $\alpha=\delta$ and let $(p_k)_{k\ge 1}$ and $(q_k)_{k\ge 1}$ be the sequences occurring in Lemma \ref{lemma:exist_sequences}. 

We introduce a new sequence of labyrinth patterns $(\Aa_k^*)_{k\ge 1}$ in the following way. For $k\ge 1$, let $\Aa_k^*(\delta)$, in short, when $\delta$ is known, $\Aa_k^*$, be the labyrinth pattern of width $m^*_k=m_k^{p_k}m_{k+1}^{q_k}$ that is identical to the mixed labyrinth set (of level $p_k+q_k$) constructed by first applying $p_k$ times the pattern $\Aa_k$ and then applying $q_k$ the pattern $\Aa_{k+1}$. Let $\linf(\Aa^*(\delta))$ be the mixed labyrinth fractal defined by the sequence of patterns $(\Aa_k^*)_{k\ge 1}$. Let $a(e_1,e_2)$ be an arc in $\linf(\Aa^*(\delta))$ that connects two exits $e_1\ne e_2$ of this mixed labyrinth fractal. Then its box counting dimension is
\begin{align*} 
\dim_B(a(e_1,e_2))&= \lim_{n\to \infty}
\frac{\log \left( \prod_{k=1}^n\big( (4k^2+4k+7)^{p_k}\left( 4(k+1)^2+4(k+1)+7 \right)^{q_k}   \big) \right) }{\log  \left( \prod_{k=1}^n\big( (4k+1)^{p_k}\left( 4(k+1)+1 \right)^{q_k}   \big) \right)   }\\
&= \lim_{n\to \infty}\frac{\sum_{k=1}^n p_ka+q_kc}{\sum_{k=1}^n p_kb+q_kd}.
\end{align*}
Thus, with the notation from Lemma \ref{lemma:exist_sequences} we obtain
\begin{align*} 
\dim_B(a(e_1,e_2))&= \lim_{n\to \infty} \frac{\sum_{k=1}^n (p_kb+q_kd)
\left( \alpha - \frac{(\alpha b -a)^2}{(cb-da)+(\alpha b-a)b \varepsilon_k}\varepsilon_k  \right)}{\sum_{k=1}^n(p_kb+q_kd)}\\
&=\lim_{n\to \infty} 
\left( 
\alpha- 
\frac{\sum_{k=1}^n (p_kb+q_kd)\cdot \frac{(\alpha b -a)^2}{(cb-da)+(\alpha b-a)b \varepsilon_k }\varepsilon_k}
{\sum_{k=1}^n(p_kb+q_kd)} 
\right ).
\end{align*} 
Hence, $\dim_B(a(e_1,e_2))\le \alpha.$ 
It remains to prove that for every $\varepsilon >0$, we have $\dim_B(a(e_1,e_2))> \alpha-\varepsilon,$ i.e., that
\begin{equation}\label{eq:inequality}
\lim_{n\to \infty}\frac{\sum_{k=1}^n (p_kb+q_kd)\cdot \frac{(\alpha b -a)^2}{(cb-da)+(\alpha b-a)b \varepsilon_k }\varepsilon_k}
{\sum_{k=1}^n(p_kb+q_kd)} <\varepsilon.
\end{equation}
By taking into consideration that 
$\displaystyle
\lim_{k\to \infty}\frac{(\alpha b-a)^2}{(cb-da)+(\alpha b -a)b\varepsilon_k}\varepsilon_k =0, 
$
and that the sequence $\left( \frac{(\alpha b-a)^2}{(cb-da)+(\alpha b -a)b\varepsilon_k}\varepsilon_k \right)_{k\ge 1}$ is monotonically decreasing, it follows that there is a $k_0$ such that for all $k\ge n_0$ the inequality $ \displaystyle \frac{(\alpha b-a)^2}{(cb-da)+(\alpha b -a)b\varepsilon_k}\varepsilon_k <\frac{\varepsilon}{2}$ holds, and therefore we obtain that the left hand term in \eqref{eq:inequality} is less than
\[
\lim_{n\to \infty} \frac{\sum_{k=1}^{n_0} (p_kb+q_kd)\cdot \frac{(\alpha b -a)^2}{(cb-da)+(\alpha b-a)b \varepsilon_k }\varepsilon_k+\frac{\varepsilon}{2} \cdot \sum_{k=n_0+1}^n(p_k b+q_k d)}{\sum_{k=1}^{n_0}(p_k b+q_k d)+ \sum_{k=n_0+1}^{n}(p_k b+q_k d)} <\frac{\varepsilon}{2},
\]
which yields $\dim_B(a(e_1,e_2))=\alpha =\delta.$

\end{proof}

\begin{corollary}\label{cor:dimB(fractal=delta)}
Under the assumptions of Proposition \ref{prop:dimB_arcs_exits=2} we have 
\[
\dim_B\big(\linf(\Aa^*(\delta))\big)=\delta.
\]
\end{corollary}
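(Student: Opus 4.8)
The plan is to derive the corollary from Theorem \ref{theo:omnidimensional} in essentially the same way Corollary \ref{cor:dimB(fractal)=2} was derived from Proposition \ref{prop:dimB_arcs_exits=2}: the box counting dimension of the whole fractal is governed by the dimension of the arc connecting two exits, because the arc between two exits provides a lower bound for the dimension, while the covering of the whole fractal $\linf(\Aa^*(\delta))$ by white squares of level $n$ provides a matching upper bound. First I would fix $\delta\in[1,2]$ and the sequence of patterns $(\Aa^*_k)_{k\ge1}$ constructed in the proof of Theorem \ref{theo:omnidimensional}, so that $\linf(\Aa^*(\delta))$ is the corresponding mixed labyrinth fractal. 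By Theorem \ref{theo:omnidimensional} the arc $a(e_1,e_2)$ connecting two exits $e_1\ne e_2$ satisfies $\dim_B(a(e_1,e_2))=\delta$, and since $a(e_1,e_2)\subseteq \linf(\Aa^*(\delta))$, monotonicity of box counting dimension yields $\dim_B\big(\linf(\Aa^*(\delta))\big)\ge \delta$.

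For the reverse inequality I would estimate the number of white squares of level $n$ needed to cover $\linf(\Aa^*(\delta))$. With $\delta_n=\frac{1}{m^*(n)}$ (where $m^*(n)=\prod_{k=1}^n m^*_k$), the fractal is covered by the $N_n$ white squares of $\W_n$, each of side $\delta_n$, and the number of such squares is the product over $k$ of the number of white squares in $\Aa^*_k$. By the same reasoning as in the proof of Proposition \ref{prop:dimB_arcs_exits=2} (the sequence $\delta_n$ decreases fast enough, e.g. $\delta_n<\frac15\delta_{n-1}$, to compute box dimension along it), one gets
\[
\dim_B\big(\linf(\Aa^*(\delta))\big)=\lim_{n\to\infty}\frac{\log N_n}{\log m^*(n)}.
\]
The key point is that each snake cross pattern $\Aa_k$ of width $m_k=4k+7$ contains at most $m_k^2=(4k+7)^2$ white squares, so the number of white squares in $\Aa^*_k$ is at most $(4k+7)^{2p_k}\,(4(k+1)+7)^{2q_k}$, which is exactly the square of the denominator-type factor appearing in the dimension computation, while the numerator-type factor $4k^2+4k+7$ is itself at most $(4k+1)^2$. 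Feeding these bounds into the limit above, $\log N_n \le 2\sum_{k=1}^n (p_k\log m_k+q_k\log m_{k+1})$, whence $\dim_B\big(\linf(\Aa^*(\delta))\big)\le 2$; but more sharply, the covering by the \emph{path} squares already handled in Theorem \ref{theo:omnidimensional} shows the arc dimension is $\delta$, and a standard argument (cf.\ the Remark after Theorem \ref{theo:all_arcs_dimB=2}) extends this: every white square of $\W_n$ lies within bounded graph-distance of the exit-connecting path at an earlier level, so the covering count for the whole fractal has the same exponential growth rate as the path count, giving $\dim_B\big(\linf(\Aa^*(\delta))\big)\le \delta$.

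I would therefore structure the proof as: (1) invoke Theorem \ref{theo:omnidimensional} and monotonicity for the lower bound; (2) set up the sequence $\delta_n$ and the box-counting formula; (3) bound $N_n$ from above by comparing the full square count with the path count, using that in a snake cross pattern every white square is reachable from a vertex of the exit path within a uniformly bounded number of steps relative to $m_k$, so $N_n = O\big(D(L_n)\cdot \mathrm{const}^n\big)$ does not affect the limit — or, alternatively and more cleanly, directly estimate $N_n\le \prod (m^*_k)^2$ combined with the observation $4k^2+4k+7 \ge (4k+1)^{2-\varepsilon}$ for large $k$, which forces the ratio $\log N_n/\log m^*(n)$ to coincide with the arc ratio in the limit; (4) conclude equality. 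The main obstacle I anticipate is step (3): one must ensure the crude bound $N_n\le \prod(m^*_k)^2$ does not merely give $\dim_B\le 2$ but actually matches $\delta$. The cleanest resolution is to note that the relevant structural fact — that the number of white squares of $\Aa_k$ grows at the same logarithmic rate as the exit-path length, namely $\log(\text{\#white})/\log m_k \to 2$ and $\log(\text{path length})/\log m_k=d_k\to 2$ — means both the whole-fractal covering number and the arc covering number have the \emph{same} limiting ratio when combined with the $p_k,q_k$ weights from Lemma \ref{lemma:exist_sequences}; hence the limit defining $\dim_B(\linf(\Aa^*(\delta)))$ equals the limit defining $\dim_B(a(e_1,e_2))=\delta$. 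I would spell out only this comparison and leave the routine verification of the inequalities to the reader, as in the proofs above.
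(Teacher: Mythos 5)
Your lower bound is fine: $a(e_1,e_2)\subseteq\linf(\Aa^*(\delta))$ and monotonicity of box dimension give $\dim_B\bigl(\linf(\Aa^*(\delta))\bigr)\ge\delta$. The gap is in your step (3), the upper bound, and it is not a repairable one. The structural fact you invoke --- that the total white-square count of $\Aa_k$ and its exit-path length have the same logarithmic growth rate because both $\log(\#\text{white})/\log m_k$ and $d_k$ tend to $2$ --- is used in the wrong asymptotic regime: in the construction of $\linf(\Aa^*(\delta))$ the index $k$ is \emph{fixed} (only the two patterns $\Aa_k$ and $\Aa_{k+1}$ with $d_k<\delta<d_{k+1}$ are used, with multiplicities $p_j,q_j$ varying in $j$), so no limit $k\to\infty$ is available. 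At fixed $k$ the snake cross pattern consists of four arms of $2k^2+2k+3$ squares plus the central square, i.e.\ it has $8k^2+8k+13=2(4k^2+4k+7)-1$ white squares, roughly \emph{twice} the exit-path length. Since every white square of level $n$ contains a point of $\linf$ and is itself a grid box of side $1/m^*(n)$, the number $N_n$ of white squares is a genuine lower bound for the box count, and
\[
\frac{\log N_n}{\log m^*(n)}
=\frac{\sum_{j\le n}\bigl(p_j\log(8k^2+8k+13)+q_j\log(8(k+1)^2+8(k+1)+13)\bigr)}{\sum_{j\le n}\bigl(p_j\log m_k+q_j\log m_{k+1}\bigr)}
\;\longrightarrow\;\delta+\frac{\rho\lambda_1+\lambda_2}{\rho b+d},
\]
where $\rho=\lim_j p_j/q_j>0$ and $\lambda_1,\lambda_2\approx\log 2$ are the fixed positive gaps $\log(8k^2+8k+13)-\log(4k^2+4k+7)$, resp.\ the analogue for $k+1$. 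This correction term is bounded away from $0$, so the whole-fractal covering count does \emph{not} grow at the same rate as the path count. Your fallback estimate $N_n=O\bigl(D(L_n)\cdot\mathrm{const}^n\bigr)$ also fails: the number of elementary patterns applied up to level $n$ is $\sum_{j\le n}(p_j+q_j)$, not $O(n)$, and the resulting factor $2^{\sum_{j\le n}(p_j+q_j)}$ is of the same order as a positive power of $m^*(n)$, so it cannot be absorbed into the limit.

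Concretely, your own setup yields $\dim_B\bigl(\linf(\Aa^*(\delta))\bigr)\ge\delta+\frac{\rho\lambda_1+\lambda_2}{\rho b+d}>\delta$ rather than the required $\le\delta$. The analogy with Corollary \ref{cor:dimB(fractal)=2} is misleading: for $\delta=2$ the matching upper bound is the trivial ambient bound $\dim_B\le 2$, and nothing replaces it when $\delta<2$. Indeed, the remark following Theorem \ref{theo:all_arcs_dimB=2} already records that for blocked labyrinth fractals built from boundedly many distinct patterns the arc dimension is in general strictly smaller than the dimension of the fractal, and the computation above shows that exactly this discrepancy occurs for $\linf(\Aa^*(\delta))$. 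So the difficulty is not a missing technical lemma but the fact that the dimension of the whole dendrite is governed by the total white-square count, which here genuinely exceeds the path count; any correct treatment of the statement has to confront (or restrict) this, rather than identify the two counts.
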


\begin{theorem}\label{theo:any_arc_dimB=delta}
Let $\delta \in [1,2)$, and $\linf(\Aa^*(\delta))$ be be the corrensponding labyrinth fractal, constructed as in the proof of Theorem \ref{theo:omnidimensional}. Then,  
for any two distinct points $x,y \in \linf$ the arc $a(x,y)$ that connects the two points  in the fractal has $\dim_B(a(x,y))=\delta.$
\end{theorem}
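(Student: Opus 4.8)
The plan is to mimic the proof of Theorem \ref{theo:all_arcs_dimB=2} almost verbatim, replacing the constant bound $2$ by the target dimension $\delta$. First I would note that since $a(x,y)\subset \linf(\Aa^*(\delta))$ and $\dim_B(\linf(\Aa^*(\delta)))=\delta$ by Corollary \ref{cor:dimB(fractal=delta)}, the upper bound $\dim_B(a(x,y))\le \delta$ is immediate, so the whole work lies in the lower bound.

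For the lower bound I would argue as follows. Fix distinct points $x\ne y$ in the fractal, and for each $n\ge 1$ let $W_n(x),W_n(y)$ be the white squares of level $n$ introduced in the \textbf{Notation} preceding Theorem \ref{theo:all_arcs_dimB=2}. By the construction of mixed labyrinth sets there is an index $n_0\ge 1$ such that the path in $\G(\W_{n_0})$ joining $W_{n_0}(x)$ and $W_{n_0}(y)$ contains a white square $W\in\W_{n_0}$ in whose interior neither $x$ nor $y$ lies. Then $\linf(\Aa^*(\delta))\cap W$ is a scaled copy (by the similarity factor $1/(m^*_1 m^*_2\cdots m^*_{n_0})$) of the mixed labyrinth fractal generated by the \emph{shifted} sequence of patterns $\{\Aa^*_{k+n_0}\}_{k\ge 1}$, and the restriction $a'=a(x,y)\cap W$ is an arc joining two exits of this scaled fractal. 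Hence $\dim_B(a(x,y))\ge \dim_B(a')$, and $a'$ has the same box counting dimension as an arc between two exits of the fractal generated by $\{\Aa^*_{k+n_0}\}_{k\ge 1}$.

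It then remains to observe that the computation in the proof of Theorem \ref{theo:omnidimensional} is insensitive to discarding the first $n_0$ patterns: the box counting dimension of an arc between exits is the limit
\[
\lim_{n\to\infty}\frac{\sum_{k=n_0+1}^{n}(p_k a+q_k c)}{\sum_{k=n_0+1}^{n}(p_k b+q_k d)},
\]
and removing finitely many terms from both the numerator's and the denominator's partial sums does not change the limit (the tail estimate in \eqref{eq:inequality} already only used indices $k\ge n_0$, so the same $\varepsilon/2$ argument applies with the summation starting at $n_0+1$). Consequently an arc between two exits of the shifted fractal again has box counting dimension $\delta$, whence $\dim_B(a')=\delta$ and therefore $\dim_B(a(x,y))=\delta$.

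The only mild subtlety — the ``hard part'', although it is really just bookkeeping — is to confirm that the shifted sequence $\{\Aa^*_{k+n_0}\}_{k\ge 1}$ still satisfies the hypotheses needed for Theorem \ref{theo:omnidimensional}'s argument and in particular that the ratio-sequence appearing in Lemma \ref{lemma:exist_sequences} is still monotone and convergent to $\delta$ after the shift; this holds because the relevant sequence $\bigl(\tfrac{(\alpha b-a)^2}{(cb-da)+(\alpha b-a)b\varepsilon_k}\varepsilon_k\bigr)_{k\ge 1}$ is monotonically decreasing and tends to $0$, so its tail starting at $n_0+1$ has exactly the same properties. One also uses, as in Proposition \ref{prop:dimB_arcs_exits=2}, that the scaling factor does not affect box counting dimension and that $\delta_k=1/m^*(k)$ is an admissible scale sequence (here $\delta_k<\tfrac15\delta_{k-1}$ still holds since each $m^*_k\ge 5$). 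This completes the proof.
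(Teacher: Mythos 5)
Your proposal is correct and follows essentially the same route as the paper's own (sketched) proof: the upper bound via Corollary \ref{cor:dimB(fractal=delta)}, and the lower bound by locating a level-$n_0$ square $W$ on the path between $W_{n_0}(x)$ and $W_{n_0}(y)$, recognizing $\linf(\Aa^*(\delta))\cap W$ as a similarity image of the fractal generated by the shifted pattern sequence $\{\Aa^*_{k+n_0}\}_{k\ge 1}$, and noting that discarding finitely many initial patterns does not alter the limit computing the dimension of an arc between exits. Your explicit check that the tail of the sequence $\bigl(\tfrac{(\alpha b-a)^2}{(cb-da)+(\alpha b-a)b\varepsilon_k}\varepsilon_k\bigr)_{k\ge 1}$ retains monotonicity and convergence is a welcome addition of detail that the paper leaves implicit.
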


\begin{proof} (Sketch.) Let $\delta$ and $\linf(\Aa^*(\delta))$ be as in the above assumptions. 
For any distinct points $x,y \in \linf(\Aa^*(\delta))$, we have for the corresponding arc $a(x,y)\subseteq \linf(\Aa^*(\delta))$ that $\dim_B(a(x,y)) \le \dim_B \big( \linf(\Aa^*(\delta))\big)=\delta$. 
In order show  that  $\dim_B(a(x,y)) \ge \delta$ we proceed analogouly to the proof of Theorem \ref{theo:all_arcs_dimB=2}. Let again $n_0$ denote and integer such that the length of the path in the tree $\G(\W_{n_0})$ from $W_{n_0}(x) \in \V(\G(\W_n))$ to $W_{n_0}(y) \in \V(\G(\W_n))$ is at least three. Thus, the arc $a(x,y)$ crosses this square along
 an arc $ a' =  a\cap \big(W \cap \linf(\Aa^*(\delta))\big)$.
The set $\linf\cap W$ is a scaled mixed labyrinth fractal, defined by the sequence of patterns $(\Aa'_k)_{k\ge 1}$, where $\Aa'_k=\Aa^*_{n_0+k}$, for all $k\ge 1$. More precisely,  this new sequence of labyrinth patterns defines the labyrinth fractal $\linf '$, which  is just the image of the dendrite $\linf\cap W$ through a similarity mapping with scaling factor $ m^*_1m^*_2\dots m^*_{n_0} $. Thus, the arc $a'$ is the image through a similarity mapping of an arc between two exits of the fractal $\linf '$. We obtain $\dim_B(a')=\delta$ and since $a'\subset a(x,y)$ it follows that $\dim_B(a(x,y))\ge \dim_B(a')=\delta$, which completes the proof.

\end{proof}

\section{A glimpse into a more general setting}

Let us first recall (see \cite{laby_4x4}) that the \emph{core} of a labyrinth pattern $\Aa$ is the intersection of all labyrinth patterns that are subsets of $\Aa$, in other words a core of a labyrinth pattern is minimal in the sense that colouring any of its white squares in black would lead to a pattern that is not a labyrinth pattern anymore.  It is easy to see that the core of any snake cross pattern is the pattern itself.

\begin{figure}[h!]
\begin{center}
\begin{tikzpicture}[scale=.30]
\draw[line width=1pt] (0,0) rectangle (15,15);
\filldraw[fill=gray!40, draw=black] (7,7) rectangle (8, 8);
\filldraw[fill=orange!40, draw=black] (7,0) rectangle (8, 1);
\filldraw[fill=green!40, draw=black] (7,14) rectangle (8, 15);
\filldraw[fill=blue!40, draw=black] (0,7) rectangle (1, 8);
\filldraw[fill=red!40, draw=black] (14,7) rectangle (15, 8);
\draw[line width=0.5pt] (1, 0) -- (1,15);
\draw[line width=0.5pt] (2, 0) -- (2,15);
\draw[line width=0.5pt] (3, 0) -- (3,15);
\draw[line width=0.5pt] (4, 0) -- (4,15);
\draw[line width=0.5pt] (5, 0) -- (5,15);
\draw[line width=0.5pt] (6, 0) -- (6,15);
\draw[line width=0.5pt] (7, 0) -- (7,15);
\draw[line width=0.5pt] (8, 0) -- (8,15);
\draw[line width=0.5pt] (9, 0) -- (9,15);
\draw[line width=0.5pt] (10, 0) -- (10,15);
\draw[line width=0.5pt] (11, 0) -- (11,15);
\draw[line width=0.5pt] (12, 0) -- (12,15);
\draw[line width=0.5pt] (13, 0) -- (13,15);
\draw[line width=0.5pt] (14, 0) -- (14,15);

\draw[line width=0.5pt] (0, 1) -- (15,1);
\draw[line width=0.5pt] (0, 2) -- (15,2);
\draw[line width=0.5pt] (0, 3) -- (15,3);
\draw[line width=0.5pt] (0, 4) -- (15,4);
\draw[line width=0.5pt] (0, 5) -- (15,5);
\draw[line width=0.5pt] (0, 6) -- (15,6);
\draw[line width=0.5pt] (0, 7) -- (15,7);
\draw[line width=0.5pt] (0, 8) -- (15,8);
\draw[line width=0.5pt] (0, 9) -- (15,9);
\draw[line width=0.5pt] (0, 10) -- (15,10);
\draw[line width=0.5pt] (0, 11) -- (15,11);
\draw[line width=0.5pt] (0, 12) -- (15,12);
\draw[line width=0.5pt] (0, 13) -- (15,13);
\draw[line width=0.5pt] (0, 14) -- (15,14);

\filldraw[fill=black, draw=black] (0,0) rectangle (1, 1);
\filldraw[fill=black, draw=black] (2,0) rectangle (7, 1);
\filldraw[fill=black, draw=black] (8,0) rectangle (12, 1);
\filldraw[fill=black, draw=black] (13,0) rectangle (15, 1);
\filldraw[fill=black, draw=black] (2,1) rectangle (3, 2);
\filldraw[fill=black, draw=black] (11,1) rectangle (12, 2);
\filldraw[fill=black, draw=black] (13,1) rectangle ( 15, 2);
\filldraw[fill=black, draw=black] (0,2) rectangle (1, 3);
\filldraw[fill=black, draw=black] (2,2) rectangle (4, 3);
\filldraw[fill=black, draw=black] (5,2) rectangle (10, 3);
\filldraw[fill=black, draw=black] (13,2) rectangle (14, 3);
\filldraw[fill=black, draw=black] (0,3) rectangle (1, 4);
\filldraw[fill=black, draw=black] (2,3) rectangle (6, 4);
\filldraw[fill=black, draw=black] (11,3) rectangle (14, 4);
\filldraw[fill=black, draw=black] (0,4) rectangle (1, 5);
\filldraw[fill=black, draw=black] (5,4) rectangle (6, 5);
\filldraw[fill=black, draw=black] (7,4) rectangle (9, 5);
\filldraw[fill=black, draw=black] (10,4) rectangle (13, 5);
\filldraw[fill=black, draw=black] (0,5) rectangle (1, 6);
\filldraw[fill=black, draw=black] (2,5) rectangle (3, 6);
\filldraw[fill=black, draw=black] (4,5) rectangle (6, 6);
\filldraw[fill=black, draw=black] (8,5) rectangle (13, 6);
\filldraw[fill=black, draw=black] (14,5) rectangle (15, 6);
\filldraw[fill=black, draw=black] (0,6) rectangle (3, 7);
\filldraw[fill=black, draw=black] (2,6) rectangle (1, 7);
\filldraw[fill=black, draw=black] (4,6) rectangle (7, 7);
\filldraw[fill=black, draw=black] (8,6) rectangle (9, 7);
\filldraw[fill=black, draw=black] (12,6) rectangle (13, 7);
\filldraw[fill=black, draw=black] (14,6) rectangle (15, 7);
\filldraw[fill=black, draw=black] (2,7) rectangle (3, 8);
\filldraw[fill=black, draw=black] (4,7) rectangle (5, 8);
\filldraw[fill=black, draw=black] (10,7) rectangle (11, 8);
\filldraw[fill=black, draw=black] (12,7) rectangle (13, 8);
\filldraw[fill=black, draw=black] (0,8) rectangle (1, 9);
\filldraw[fill=black, draw=black] (2,8) rectangle (3, 9);
\filldraw[fill=black, draw=black] (6,8) rectangle (7, 9);
\filldraw[fill=black, draw=black] (8,8) rectangle (11, 9);
\filldraw[fill=black, draw=black] (12,8) rectangle (13, 9);
\filldraw[fill=black, draw=black] (14,8) rectangle (15, 9);
\filldraw[fill=black, draw=black] (0,9) rectangle (1, 10);
\filldraw[fill=black, draw=black] (2,9) rectangle (7, 10);
\filldraw[fill=black, draw=black] (9,9) rectangle (11, 10);
\filldraw[fill=black, draw=black] (12,9) rectangle (13, 10);
\filldraw[fill=black, draw=black] (14,9) rectangle (15, 10);
\filldraw[fill=black, draw=black] (0,10) rectangle (1, 11);
\filldraw[fill=black, draw=black] (3,10) rectangle (8, 11);
\filldraw[fill=black, draw=black] (10,10) rectangle (11, 11);
\filldraw[fill=black, draw=black] (14,10) rectangle (15, 11);
\filldraw[fill=black, draw=black] (0,11) rectangle (4, 12);
\filldraw[fill=black, draw=black] (9,11) rectangle (12, 12);
\filldraw[fill=black, draw=black] (13,11) rectangle (15, 12);
\filldraw[fill=black, draw=black] (0,12) rectangle (2, 13);
\filldraw[fill=black, draw=black] (5,12) rectangle (11, 13);
\filldraw[fill=black, draw=black] (14,12) rectangle (15, 13);
\filldraw[fill=black, draw=black] (3,13) rectangle (4, 14);
\filldraw[fill=black, draw=black] (10,13) rectangle (11, 14);
\filldraw[fill=black, draw=black] (12,13) rectangle (13, 14);
\filldraw[fill=black, draw=black] (14,13) rectangle (15, 14);
\filldraw[fill=black, draw=black] (1,14) rectangle (2, 15);
\filldraw[fill=black, draw=black] (3,14) rectangle (7, 15);
\filldraw[fill=black, draw=black] (8,14) rectangle (9, 15);
\filldraw[fill=black, draw=black] (10,14) rectangle (11, 15);
\filldraw[fill=black, draw=black] (12,14) rectangle (13, 15);
\filldraw[fill=black, draw=black] (14,14) rectangle (15, 15);

\end{tikzpicture}
\caption{Example: a labyrinth pattern of width $15$ whose core is the snake cross pattern $\Aa_2$ shown in Figures \ref{fig:A1A2A3} and \ref{fig:colorA2}. The central square and the pattern's exits are marked in colour}
\label{fig:ex:special_cross}
\end{center}
\end{figure}
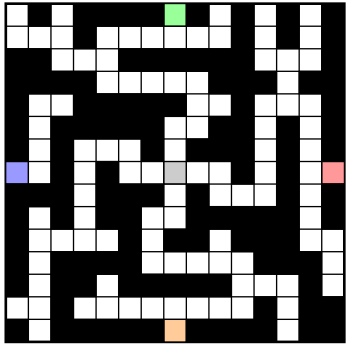

Let us denote by $\Aa_k(2)$, for $k\ge 1$, the $k$-th snake cross pattern that occurs in the construction used in the proof of Theorem \ref{theo:all_arcs_dimB=2} and $\linf(2)$ the corresponding mixed labyrinth fractal.

\begin{corollary}\label{cor:cores_same_dim}
 If ${\Aa_k}$ is a sequence of labyrinth patterns such that the core of $\Aa_k$ is  $\Aa_k^*(2)$, then  the box counting dimension of the resulting fractal $\linf$ is $\dim_B(\linf)=2$  and every arc that connects distinct points  in this dendrite has box counting dimension $2$.
\end{corollary}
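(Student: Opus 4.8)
The plan is to reduce Corollary \ref{cor:cores_same_dim} to the already-established Theorem \ref{theo:all_arcs_dimB=2} by controlling how the ``extra'' white squares (those in $\Aa_k$ but not in its core $\Aa_k^*(2)$) affect the relevant counting. The key structural fact we may invoke is that the core of a labyrinth pattern $\Aa$ is obtained from $\Aa$ by colouring black squares whose removal does not destroy the tree, exits, or corner properties; in particular, the graph $\G(\Aa^*(2)_k)$ is a subtree of $\G(\Aa_k)$, and the unique path between any two exits in $\G(\Aa_k)$ is exactly the unique path between the corresponding exits in $\G(\Aa_k^*(2))$ — the core contains all the squares that are ``forced'' by connectivity between exits. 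This is the heart of why the dimension does not change: the numerator $D(L_n)$ in the estimate is unaffected by passing to the core, while the denominator is only made larger by the extra columns in the bigger pattern.

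First I would make precise the inequality $D(L_n^{\mathrm{core}}) \le D(L_n) \le \text{(something controllable)}$. For the lower bound on $\dim_B$, the path in $\G(\W_n)$ connecting two exits, when $\W_n$ is built from the $\Aa_k$, contains as a sub-path the path built from the cores $\Aa_k^*(2)$, because at each level a forced-between-exits square stays white. Hence the number of squares along the exit-path is at least $\prod_{k=1}^n (4k^2+4k+7)$ (using the count from Proposition \ref{prop:dimB_arcs_exits=2} applied to the cores, which are precisely the $\Aa_k^*(2)$), while $m(n) = \prod_{k=1}^n m_k$ where $m_k$ is the width of $\Aa_k$ — which may be larger than $4k+1$. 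So I must check that the widths $m_k$ do not grow so fast that $\log m(n)$ outpaces $\log D(L_n)$; but this is not automatic from ``the core is $\Aa_k^*(2)$'' alone, since one could pad a pattern with an arbitrarily large width while keeping a fixed core. The resolution is that here the cores are the \emph{varying} patterns $\Aa_k^*(2)$ of the earlier construction (Theorem \ref{theo:all_arcs_dimB=2}), whose widths $4k+1 \to \infty$ already, and the only honest claim one can make is that $\dim_B(\linf) \ge 2$ provided the host widths $m_k$ satisfy $\sum_k \frac{\log(4k+1)}{\log m_k}$ behaves well — more safely, one should assume (or the intended reading is) that $m_k$ equals the width of $\Aa_k^*(2)$, i.e. the padding only recolours squares without enlarging the grid; then $D(L_n)$ can only increase relative to the core case, $m(n)$ is unchanged, and the lower bound $\dim_B \ge 2$ is immediate, while $\dim_B(\linf)\le 2$ is trivial. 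This is the step I expect to be the main obstacle: pinning down exactly what ``the core of $\Aa_k$ is $\Aa_k^*(2)$'' is allowed to mean, and either restricting to width-preserving enrichments or imposing a growth condition on the widths.

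Granting that, the argument then mirrors the earlier proofs verbatim. For arcs between exits: by Lemma \ref{lemma:arc_construction} the exit-arc is the nested intersection of unions of squares along the exit-path in $\G(\W_n)$; squeezing $\prod_{k=1}^n(4k^2+4k+7) \le D(L_n) \le (\text{number of white squares in }\W_n)$ and dividing logarithms by $\log m(n)$, the same estimate $4k^2+4k+7 > (4k+1)^{2-\varepsilon}$ for large $k$ forces the liminf of $\frac{\log D(L_n)}{\log m(n)}$ to be at least $2$, hence exactly $2$. The statement $\dim_B(\linf)=2$ then follows as in Corollary \ref{cor:dimB(fractal)=2}. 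For arbitrary distinct $x,y \in \linf$: exactly as in the proof of Theorem \ref{theo:all_arcs_dimB=2}, choose $n_0$ so that the path in $\G(\W_{n_0})$ from $W_{n_0}(x)$ to $W_{n_0}(y)$ has length at least three, pick an intermediate square $W$ whose interior contains neither $x$ nor $y$, observe that $a(x,y)\cap W$ is an exit-arc of the scaled sub-fractal $\linf\cap W$ — which is generated by the tail sequence $(\Aa_{n_0+k})_{k\ge1}$ whose cores are the tail of the $\Aa_k^*(2)$'s — apply the exit-arc result to this tail (deleting the first $n_0$ terms of the limit changes nothing) to get $\dim_B(a(x,y)\cap W)=2$, and conclude $\dim_B(a(x,y)) \ge 2$, hence $=2$ since $a(x,y)\subseteq \linf$. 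I would present this last part in one compressed paragraph, citing the proof of Theorem \ref{theo:all_arcs_dimB=2} rather than repeating it.
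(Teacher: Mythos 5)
Your proof is correct and follows essentially the same route as the paper's own (very terse) proof: the trivial upper bound $\dim_B\le 2$, reduction to the core-generated fractal $\linf(2)\subset\linf$ for the lower bound, and the localization argument of Theorem \ref{theo:all_arcs_dimB=2} for arcs between arbitrary distinct points. The ``main obstacle'' you flag is in fact a non-issue: by the paper's definition the core of a pattern $\Aa\subseteq{\cal S}_m$ is an intersection of subsets of ${\cal S}_m$, hence itself an $m$-pattern on the same grid, so the width is automatically preserved and the ``width-preserving enrichment'' reading you settle on is the only possible one.
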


\begin{proof}
The fact that $\dim_B(\linf(2))\le 2$ is basic, since the box counting dimension of the unit square is 2. Forthermore, $\linf(2)\subset \linf$ implies $\dim_B(\linf)\ge 2$. 
For $x,y \in \linf$, $x\ne y$, using the same ideas as in the proof of Theorem \ref{theo:all_arcs_dimB=2} we obtain that $\dim_B(a(x,y))\ge 2$, which, together to the converse inequality, implies $\dim_B(a(x,y))=2$.

\end{proof}

We conclude with a more general statement.

\begin{conjecture}\label{cor:cores_same_dim}
Let $\delta \in [1,2]$ If  $\{\Aa_k^*\}_{k\ge 1}$ is a sequence of labyrinth patterns such that the core of $\Aa_k$ is  $\Aa_k^*(\delta )$,  i.e., the $k$-th snake cross pattern used in the construction of $\linf (\Aa^*(\delta))$. 
Then every arc that connects distinct points  in this dendrite has box counting dimension $\delta$.
\end{conjecture}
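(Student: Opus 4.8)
The plan is to mimic the proof of Theorem \ref{theo:all_arcs_dimB=2} (for the case $\delta = 2$), but now dealing with general $\delta \in [1,2]$ and with the fact that the patterns $\Aa_k$ are only required to have $\Aa_k^*(\delta)$ as their core, rather than being equal to it. First I would record that by definition of the core (\cite{laby_4x4}), for each $k$ the pattern $\Aa_k^*(\delta)$ is a subpattern of $\Aa_k$, so $\W_n(\Aa^*(\delta)) \subseteq \W_n(\Aa)$ for all $n$, whence $\linf(\Aa^*(\delta)) \subseteq \linf$ and $\dim_B(a(x,y)) \le \dim_B(\linf)$ as the easy inequality. Actually more is true and more useful: the white squares forming the \emph{path} between two exits in $\G(\W_n(\Aa^*(\delta)))$ are also a sub-path (between the same exits) in $\G(\W_n(\Aa))$, since adding white squares to a labyrinth pattern cannot decrease the number of squares along the exit path — indeed the unique arc between two exits of a labyrinth fractal is determined by the tree structure, and a core path is still a path in the larger graph. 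This gives a lower bound: the path in $\G(\W_n(\Aa))$ between two exits contains at least $D(L_n(\Aa^*(\delta)))$ squares.

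Next I would localize, exactly as in Theorem \ref{theo:all_arcs_dimB=2}: given distinct $x,y\in\linf$, pick $n_0$ so large that the path in $\G(\W_{n_0})$ joining $W_{n_0}(x)$ and $W_{n_0}(y)$ has at least three vertices, isolating a square $W \in \W_{n_0}$ through whose interior neither $x$ nor $y$ passes. The arc $a(x,y)$ meets $W\cap\linf$ in a sub-arc $a'$ that connects two exits of the scaled labyrinth fractal $\linf' = \linf\cap W$, which is generated by the tail sequence $\{\Aa_{n_0+k}\}_{k\ge1}$ whose cores are $\{\Aa^*_{n_0+k}(\delta)\}_{k\ge1}$. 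Since $a'\subseteq a(x,y)$ and box counting dimension is a similarity invariant, it suffices to show any arc between two exits of such a fractal has $\dim_B = \delta$. So the whole problem reduces to the "exit" case for sequences whose cores are the $\Aa_k^*(\delta)$.

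For that exit case I would combine two covers. For the upper bound $\dim_B(a') \le \delta$: since $a' \subseteq W \cap \linf \subseteq$ a scaled copy of the unit square, the trivial bound gives only $2$, so instead I would argue that the arc between two exits of $\linf$ is contained in the arc between the corresponding exits of $\linf(\Aa^*(\delta))$ — this needs that adding white squares to the generating patterns does not change which exit pair an arc connects nor force the arc outside the core fractal; here one uses that the exits of a labyrinth pattern and of its core coincide (the core is obtained without deleting exit squares) and that the unique arc between fixed exits in the bigger fractal, restricted appropriately, still lies in the core fractal. Granting this, $a' \subseteq$ (scaled copy of) an arc between exits of $\linf(\Aa^*(\delta))$, whose dimension is $\delta$ by Theorem \ref{theo:omnidimensional}, giving $\dim_B(a')\le\delta$. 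For the lower bound $\dim_B(a')\ge\delta$: the path in $\G(\W_n)$ between the two exits has at least $D(L_n(\Aa^*(\delta))) = \prod_{k=1}^{n}(\cdots)$ squares of side $1/m(n)$ where $m(n)$ is the width product \emph{for the larger patterns} $\Aa_k$, which is at least the width product for the cores; running the estimate of formula \eqref{eq:dimB_arcs_exits} (now in the form used in the proof of Theorem \ref{theo:omnidimensional}, with the $p_k,q_k$-weighted products) shows this ratio still tends to $\delta$, because enlarging the denominator's base only helps — more carefully, one needs the number of path squares to grow at least as the appropriate power of the total width, which holds since the core path length is the governing quantity and the extra white squares in $\Aa_k\setminus\Aa_k^*(\delta)$ do not shrink it.

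The main obstacle I expect is precisely this last comparison: controlling $\dim_B(a')$ when $\Aa_k$ is strictly larger than its core $\Aa_k^*(\delta)$. Unlike the $\delta=2$ case, where any cover is good enough for the upper bound, here the upper bound $\dim_B(a')\le\delta$ genuinely uses structural information about how the arc between two exits sits inside the fractal, and one must rule out the arc "taking a longer detour" through the extra white squares of $\Aa_k$ in a way that would inflate its dimension. The clean way around this is the containment statement: the unique arc between two exits of $\linf$ coincides with (is contained in) the unique arc between those exits of the \emph{core} fractal $\linf(\Aa^*(\delta))$, because in a labyrinth fractal the arc between two fixed exits is forced by the tree structure to pass only through squares that are "between" those exits at every level, and a short induction shows those squares all belong to the core pattern at each level. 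With that lemma in hand the two dimension bounds both follow from Theorem \ref{theo:omnidimensional} together with scaling-invariance of $\dim_B$, and the conjecture is reduced to a theorem; I would state and prove this containment lemma first, then assemble the argument as above.
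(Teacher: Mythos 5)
First, a point of context: the paper gives no proof of this statement --- it is deliberately left as a conjecture --- so your proposal is an attempt to settle an open question rather than a reconstruction of an existing argument. The part of your programme that does work is the containment lemma for exit-to-exit arcs: since the exits of a labyrinth pattern and of any sub-labyrinth-pattern must coincide (the exits property forces uniqueness of the exit pairs), $\G(\W_n(\Aa^*))$ is a connected subtree of the tree $\G(\W_n(\Aa))$ containing the common exit squares, so the unique path between two exits in the larger tree is exactly the path in the core tree; hence the arc between two exits of $\linf$ \emph{equals} the corresponding arc of $\linf(\Aa^*(\delta))$ and has box counting dimension $\delta$ by Theorem \ref{theo:omnidimensional}. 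This settles the conjecture for arcs joining exits, and, via the localization step, gives the lower bound $\dim_B(a(x,y))\ge\delta$ for arbitrary $x\ne y$.

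The gap is the sentence ``it suffices to show any arc between two exits of such a fractal has $\dim_B=\delta$.'' The inclusion $a'\subseteq a(x,y)$ yields only $\dim_B(a(x,y))\ge\dim_B(a')$; it gives no upper bound, and for $\delta<2$ the upper bound is precisely the hard direction. The trivial estimate $\dim_B(a(x,y))\le\dim_B(\linf)$ does not help, because the hypothesis only pins down the \emph{core} of $\Aa_k$: the pattern $\Aa_k$ may contain on the order of $(m_k^*)^2$ additional white squares arranged as dead-end branches hanging off the core (the width-$15$ example in the final section is a small instance), so $\dim_B(\linf)$ can exceed $\delta$ and even approach $2$. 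The portions of $a(x,y)$ inside $W_n(x)$ and $W_n(y)$ are not exit-to-exit arcs and need not lie in the core fractal: at each level they run from $W_{n+1}(x)$, which may sit at the tip of such a branch, to an exit of $W_n(x)$, a path whose length can be comparable to $(m_{n+1}^*)^2$ rather than to the core path length $\ell_{n+1}^*\approx(m_{n+1}^*)^{\delta}$. These end contributions add terms of order $(m_j^*)^2\prod_{i>j}\ell_i^*$ to the covering numbers, and their negligibility against $\prod_i\ell_i^*$ amounts to a quantitative condition such as $\sum_j (m_j^*)^{2}\big/\prod_{i\le j}\ell_i^*<\infty$. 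This does not follow from the hypotheses: the widths $m_k^*=m_k^{p_k}m_{k+1}^{q_k}$ depend on the sequences $(p_k),(q_k)$ of Lemma \ref{lemma:exist_sequences}, whose growth is unconstrained, so $\log m_n^*$ may dominate $\sum_{j<n}\log m_j^*$ along a subsequence; for a point $x$ living at the tip of a branch at every level one then gets $\overline{\dim}_B(a(x,y))>\delta$. You must either prove that the end-square contributions are always negligible (I do not see how, and I suspect the statement is false without additional hypotheses on $\Aa_k\setminus\Aa_k^*(\delta)$ or on the growth of $(p_k,q_k)$) or build such hypotheses into the statement; as written, your argument proves the conjecture only for arcs between exits, plus the lower bound in general.
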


\end{document}